\renewcommand{\liminf}{\varliminf}
\newtheorem{theorem}{Theorem}[section]
\newtheorem{lemma}[theorem]{Lemma}
\newtheorem{corollary}[theorem]{Corollary}
\newtheorem{proposition}[theorem]{Proposition}
\newtheorem{question}[theorem]{Question}
\theoremstyle{definition}
\newtheorem{remark}[theorem]{Remark}
\newtheorem{notation}[theorem]{Notation}
\newcommand{\diam}{\mathrm{diam}}
\newcommand{\Haus}{\mathrm{dim}_{\mathrm{H}}\:}
\newcommand{\cH}{\mathcal{H}}
\numberwithin{equation}{section}
\title[Prime-representing functions and Hausdorff dimension]{Prime-representing functions and\\ Hausdorff dimension}
\author[K. Saito]{Kota Saito}
\date{}
\address{Kota Saito\\
Graduate School of Mathematics\\ Nagoya University\\ Furo-cho\\ Chikusa-ku\\ Nagoya\\ 464-8602\\ Japan}
\curraddr{}
\email{m17013b@math.nagoya-u.ac.jp}
\subjclass[2010]{Primary:11K55, Secondary:11A41}
\keywords{prime-representing function, Hausdorff dimension}
\begin{document}
\maketitle
\begin{abstract}
In 2010, Matom\"{a}ki investigated the set of $A>1$ such that the integer part of $ A^{c^k} $ is a prime number for every $k\in \mathbb{N}$, where $c\geq 2$ is any fixed real number. She proved that the set is uncountable, nowhere dense, and has Lebesgue measure $0$. In this article, we show that the set has Hausdorff dimension $1$. 
\end{abstract}
\section{Introduction}
Let us $\mathbb{N}$ denote the set of all positive integers. We say that a function $f: \mathbb{N} \rightarrow \mathbb{N}$ is \textit{prime-representing} if $f(k)$ is a prime number for each $k\in \mathbb{N}$. The existence and construction of prime-representing functions have been studied since long ago. For example, Mills showed that there exists a constant $A>1$ satisfying that $\lfloor  A^{3^k}\rfloor$ is prime-representing \cite{Mills}, where let us $\lfloor x\rfloor$ denote the integer part of $x\in \mathbb{R}$. Ku{\i}pers showed that the exponent $3^k$ can be replaced with $c^k$ for all integers $c\geq 3$, that is, for all integers $c\geq 3$, there exists $A>1$ such that $\lfloor A^{c^k} \rfloor$ is prime-representing \cite{Kuipers}. Further, Niven extended this result for real $c> 8/3 =2.6666\cdots$ \cite{Niven}. Wright investigated different types of prime-representing functions.  He showed that there exists $\alpha>0$ such that all terms of the sequence 
\begin{equation}\label{Sequence-Wright}
\lfloor 2^{\alpha}  \rfloor,\quad  \lfloor 2^{2^{\alpha}} \rfloor,\quad \lfloor 2^{2^{2^{\alpha}}}  \rfloor,\quad \cdots
\end{equation}
are prime numbers \cite{Wright51}. Those examples of prime-representing functions can be reformed as the iterated composition of certain functions.  Indeed, defining $\lambda_M(x)=x^c$, we observe that $\lfloor A^{c^n} \rfloor= \lfloor \lambda_M \circ \cdots \circ \lambda_M (A) \rfloor$. Additionally, defining $\lambda_W (x)=2^x$, each term of the sequence \eqref{Sequence-Wright} can be reformed as $\lfloor \lambda_W \circ \cdots \circ \lambda_W (\alpha) \rfloor$. Wright studied a class of such prime-representing functions, and he generalized and arranged the theory of representing functions. By his work \cite[Section~6]{Wright54}, we immediately obtain:

\begin{theorem}\label{Theorem-Wright}
Let $\epsilon$ be an arbitrarily small positive real number, let $(c_k)_{k\in \mathbb{N}}$ be a real sequence satisfying $c_k\geq 925/348+\epsilon$ for all $k\in \mathbb{N}$, and let $C_k=c_1\cdots c_k$. The set of $A>1$ satisfying that $\lfloor A^{C_k}\rfloor$ is prime-representing is uncountable, nowhere dense, and has Lebesgue measure $0$.
 \end{theorem}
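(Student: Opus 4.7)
The plan is to build the set of admissible $A$ as the intersection of a nested family of closed intervals, in the style of Wright's iterative construction, and then read off the three qualitative properties from the construction. The sole analytic input is an Ingham-type short-interval prime estimate: for some $\theta < 1 - 348/925$ and all sufficiently large $x$, the interval $[x, x + x^{\theta}]$ contains a prime. The numerical threshold $925/348$ in the hypothesis on $c_k$ is calibrated precisely so that this estimate can be invoked at every stage of the iteration below.

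I would proceed by induction on $k$, maintaining a closed interval $I_k = [a_k, b_k] \subset (1, \infty)$ of positive length with $I_k \subset I_{k-1}$, such that $\lfloor A^{C_j} \rfloor$ is prime for every $A \in I_k$ and every $1 \le j \le k$. Given $I_k$, the image $J_k := [a_k^{C_{k+1}}, b_k^{C_{k+1}}]$ under $x \mapsto x^{C_{k+1}}$ has length at least $C_{k+1} a_k^{C_{k+1}-1}(b_k - a_k)$ by the mean value theorem, and the assumption $c_{k+1} \ge 925/348 + \epsilon$ ensures this exceeds the prime-gap bound near $a_k^{C_{k+1}}$. Hence $J_k$ contains a prime $p_{k+1}$ strictly less than its right endpoint, and $I_{k+1}$ can be chosen as a closed subinterval of the preimage of a small closed subinterval of $[p_{k+1}, p_{k+1}+1)$ under $A \mapsto A^{C_{k+1}}$. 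This interval has positive length and satisfies $\lfloor A^{C_{k+1}} \rfloor = p_{k+1}$ for all $A \in I_{k+1}$. Any $A \in \bigcap_k I_k$ then has the required primality property.

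To upgrade from a single $A$ to the claimed topological and metric statements, observe that for $k$ large the image $J_k$ actually contains many primes (of order $|J_k|/\log a_k^{C_{k+1}}$ by the prime number theorem), so at least two disjoint candidate subintervals can be produced at each step. Branching in this way yields a Cantor-like subset of $\bigcap_k I_k$, proving uncountability. Nowhere density follows from the observation that if the set contained an interval $(\alpha, \beta)$, then $(\alpha^{C_k}, \beta^{C_k})$ would be an interval of length tending to infinity whose real numbers all have prime integer parts; but any interval of length at least $3$ contains an even integer larger than $2$, which is composite, so this is impossible. For Lebesgue measure zero, bound the length of $I_{k+1}$ crudely by $(C_{k+1} a_k^{C_{k+1}-1})^{-1}$ (the length of the preimage of a unit interval); the geometric growth of $C_k$ forces the total $k$-th generation length across all branches to decay to zero.

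The main obstacle is the quantitative bookkeeping in the inductive step: one must simultaneously ensure that $J_k$ is long enough to contain a prime and that the resulting preimage $I_{k+1}$ is long enough to continue the induction one step further. The threshold $925/348$ is exactly the balance point dictated by the prime-gap exponent used in \cite[Section~6]{Wright54}, and loosening the hypothesis below it breaks the inductive step. This delicate balance is the heart of Wright's construction, and verifying it is where all the technical work lies.
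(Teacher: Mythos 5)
The paper does not actually prove Theorem~\ref{Theorem-Wright}: it is quoted as an immediate consequence of Wright's work \cite[Section~6]{Wright54}, and the paper's own constructions (Sections~\ref{Section-Simple}--\ref{Section-ProofMain}) are aimed at the stronger Hausdorff-dimension statement of Theorem~\ref{Theorem-main1} under the weaker hypothesis $c_k\geq 2$. With that caveat, your sketch is a plausible reconstruction of Wright's argument, and it is structurally the same nested-interval scheme the paper itself deploys: at level $k$ one tracks the preimage under $A\mapsto A^{C_k}$ of a window $[p,p+1)$ about a prime $p$, and a short-interval prime theorem is what lets the induction close.

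A few points would need tightening before this is a proof. First, your nowhere-density argument only shows the set has empty interior; nowhere density requires that the \emph{closure} have empty interior. The same computation, run locally, repairs it: given any $(\alpha,\beta)\subseteq(1,\infty)$, choose $k$ with $\beta^{C_k}-\alpha^{C_k}>3$, pick an even $n>2$ with $[n,n+1)\subset(\alpha^{C_k},\beta^{C_k})$, and note that the nondegenerate subinterval $[n^{1/C_k},(n+1)^{1/C_k})$ of $(\alpha,\beta)$ is disjoint from $\mathcal{W}(c_k)$. Second, the measure-zero bound must cover the entire set, not just the Cantor subset you construct; use instead the cover $\mathcal{W}(c_k)\cap(1,T]\subseteq\bigcup_{p\leq T^{C_k}}[p^{1/C_k},(p+1)^{1/C_k})$ for each $k$, whose total length is $\ll T/(C_k\log T)\to 0$. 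Third, you take $I_{k+1}$ to be a ``small closed subinterval'' of the preimage of $[p_{k+1},p_{k+1}+1)$; if a constant factor is sacrificed at each stage these losses compound geometrically in $|J_k|$ and the short-interval input eventually fails, so one must keep (essentially) the full unit window, as the paper does with its half-open intervals $[a(\mathbf{j})^{1/C_k},(a(\mathbf{j})+1)^{1/C_k})$ together with Remark~\ref{Remark-Falconer}. Finally, a numerical aside: Ingham's exponent $5/8=0.625$ exceeds $1-348/925=577/925\approx 0.6238$, so the plain Ingham theorem is just barely insufficient; the threshold $925/348$ comes from Ingham's method coupled with a sharper subconvexity bound for $\zeta(1/2+it)$ available to Wright, and any of the later exponents (Huxley's $7/12$, or the Baker--Harman--Pintz $21/40$ quoted as Lemma~\ref{Lemma-BakerHarmanPintz}) is comfortably enough.
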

Note that $925/348 \approx 2.6580$. For all positive real sequences $(c_k)_{k\in\mathbb{N}}$, we define $C_k=c_1\cdots c_k$ and let
\begin{equation}
\mathcal{W}(c_k ) =\{ A>1 \colon \lfloor A^{C_k} \rfloor \text{ is prime-representing} \}.
\end{equation}
 In view of the work of Wright, we can discern the geometric structure of $\mathcal{W}(c_k)$.  
 
 Matom\"{a}ki extended Theorem~\ref{Theorem-Wright} for all real sequence $(c_k)_{k\in \mathbb{N}}$ with $c_k\geq 2$ $(k\in \mathbb{N})$\cite[Theorem~3]{Matomaki}. She evaluated the number of short intervals $[n,n+n^\gamma]\subseteq [x,2x]$ which contains at least $Cn^\gamma /(\log n)$ prime numbers, and arrived at the condition $c_k\geq 2$.

The goal of this article is to discern more details concerning the geometric structure of the set $\mathcal{W}(c_k)$ in view of fractal geometry. More precisely, we evaluate the Hausdorff dimension of $\mathcal{W}(c_k)$. For all sets $F\subseteq \mathbb{R}$, let us $\Haus F$ denote the Hausdorff dimension of $F$. We will give this definition in Section \ref{Section-Preparations}. We say that a set $F\subseteq \mathbb{R}$ has \textit{full Hausdorff dimension} if $\Haus F=1$. 

\begin{theorem}\label{Theorem-main1}
 Let $(c_k)_{k\in \mathbb{N}}$ be a bounded real sequence satisfying $c_k\geq 2$ for all $k\in \mathbb{N}$. Let $C_k=c_1\cdots c_k$ for all $k\in \mathbb{N}$. Then we have 
 \[
 \Haus \mathcal{W}(c_k)=\Haus \{A>1 \colon \lfloor A^{C_k} \rfloor \text{ is prime-representing} \}=1.
 \]
\end{theorem}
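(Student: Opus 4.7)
The plan is to establish the lower bound $\Haus\mathcal{W}(c_k) \ge 1$ by constructing, for every $\varepsilon>0$, a Cantor-type subset $E_\varepsilon \subseteq \mathcal{W}(c_k)$ with $\Haus E_\varepsilon \ge 1-\varepsilon$; the matching upper bound is automatic from $\mathcal{W}(c_k) \subseteq \mathbb{R}$. The main quantitative input is already present in Matom\"aki's work: she shows that for a positive-density set of integers $n \in [y,2y]$ the short interval $[n^{c_k}, n^{c_k}+n^{c_k-1}]$ contains at least $\asymp n^{c_k-1}/\log n$ primes. The task is thus to convert this quantitative count into a dimension bound via a Cantor-like construction and the mass-distribution principle.

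First I would build a nested sequence of families $\mathcal{I}_k$ of pairwise-disjoint closed subintervals of a sufficiently large initial interval $I_0 \subseteq (1,\infty)$, such that every $A \in I \in \mathcal{I}_k$ satisfies $\lfloor A^{C_i}\rfloor = p_i(I)$ for assigned primes $p_i(I)$, $1 \le i \le k$. For each $I \in \mathcal{I}_{k-1}$, the map $A \mapsto A^{C_k}$ sends $I$ diffeomorphically onto a short interval $J_I$ of length $\asymp p_{k-1}(I)^{c_k-1}$ sitting inside $[p_{k-1}(I)^{c_k},(p_{k-1}(I)+1)^{c_k}]$; each prime in $J_I$ yields one child of $I$ in $\mathcal{I}_k$ of $A$-length $\asymp 1/(C_k A^{C_k-1})$. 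Using Matom\"aki's estimate, I would retain at each step only those parents whose $J_I$ is \emph{good}, i.e.\ contains $\gg p_{k-1}(I)^{c_k-1}/\log p_{k-1}(I)$ primes; a positive fixed fraction of candidate parents survive this restriction.

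Next I would estimate $\Haus E_\varepsilon$ for $E_\varepsilon = \bigcap_k\bigcup_{I\in\mathcal{I}_k} I$ by placing uniform mass on each level and passing to the weak-$*$ limit measure $\mu$. Writing $N_k = \#\mathcal{I}_k$ and $\delta_k = \max_{I\in\mathcal{I}_k} \abs{I}$, telescoping the per-step branching gives
\[
\log N_k \sim (C_k-1)\log A_0 - O\Bigl(\sum_{j\le k}\log(C_j\log A_0)\Bigr), \qquad \log(1/\delta_k) \sim (C_k-1)\log A_0.
\]
The boundedness of $(c_k)$ implies $C_k \le M^k$ for some $M$, whence $\sum_{j\le k}\log C_j = O(k^2\log M) = o(C_k)$, and consequently $\log N_k/\log(1/\delta_k) \to 1$. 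The mass-distribution principle then yields $\Haus E_\varepsilon \ge 1-\varepsilon$.

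The main obstacle will be verifying the Frostman-type estimate $\mu(B(x,r)) \lesssim r^{1-\varepsilon}$ uniformly at intermediate scales $r \in [\delta_k,\delta_{k-1}]$, not only at the construction scales $\delta_k$ themselves. Primes in a short interval need not be evenly spaced, so children of a given parent may cluster and artificially inflate $\mu(B(x,r))$. To address this I would thin each $\mathcal{I}_k$ to a sub-family whose elements are mutually $\delta_k^{1+\varepsilon}$-separated in $A$-space; a standard upper bound on small prime gaps (e.g.\ via Selberg's sieve) shows the thinning discards at most a $p_{k-1}^{-c\varepsilon}$ fraction of children, which is negligible in the dimension count. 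The separated Cantor set then supports a measure with the required Frostman decay, and letting $\varepsilon \to 0$ completes the proof.
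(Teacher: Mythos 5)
Your overall strategy matches the paper's: build a nested family of intervals keyed by finite prime sequences, feed Matom\"aki's short-interval prime count (\cite[Lemma~9]{Matomaki}, reproduced here as Lemma~\ref{Lemma-Matomaki}) into each branching step, and then exploit boundedness of $(c_k)$ to force $\sum_{j\le k}\log C_j = O(k^2) = o(C_k)$ so that the log-count over log-scale ratio tends to $1$. That last observation is precisely the engine of Proposition~\ref{Proposition-General}.

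Where you diverge is in the dimension estimate itself, and here your proposal is correct in spirit but introduces an unnecessary detour. You plan to apply the mass-distribution principle directly and, anticipating that the Frostman bound $\mu(B(x,r)) \lesssim r^{1-\varepsilon}$ might fail at intermediate scales because primes cluster, you propose thinning each level to a $\delta_k^{1+\varepsilon}$-separated sub-family via Selberg's sieve. This is not needed. The paper instead invokes Falconer's general-Cantor-set lower bound (Example~4.6(a), stated as Lemma~\ref{Lemma-Falconer}): if each $(k-1)$-st level interval contains at least $m_k$ children separated by gaps of at least $\epsilon_k$, then $\Haus F \ge \liminf_k \log(m_1\cdots m_{k-1})/(-\log(m_k\epsilon_k))$. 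Crucially, this lemma takes only the \emph{minimum} sibling gap and \emph{minimum} branching as inputs, and its proof already absorbs clustering by bounding the number of level-$k$ intervals a ball of radius $r$ can meet by $\min(m_k,\, 2r/\epsilon_k + 1)$; no evenness of spacing is used. For the sets here, consecutive children $a(\mathbf{j},j)$ and $a(\mathbf{j},j+1)$ are distinct odd primes and hence differ by at least $2$, which combined with the mean value theorem gives $\epsilon_k \gtrsim C_k^{-1}(a(1)+1)^{(1-C_k)/C_1}$ (Lemma~\ref{Lemma-dist}) with no sieve input at all. Since $-\log(m_k\epsilon_k) \sim (C_{k-1}/C_1)\log a(1) \sim \log(m_2\cdots m_{k-1})$, the dimension ratio tends to $1$ regardless of how primes cluster inside the short intervals. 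In short: your core construction and the $o(C_k)$ bookkeeping are sound, but you should drop the sieve-theoretic thinning and instead cite or reprove Falconer's Example~4.6(a), which handles the intermediate-scale issue you correctly flagged without any extra arithmetic input.
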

We will prove Theorem~\ref{Theorem-main1} in Section~\ref{Section-ProofMain}. By combining Matom\"{a}ki's theorem \cite[Theorem~3]{Matomaki} and Theorem~\ref{Theorem-main1}, the set $\mathcal{W}(c_k)$ is nowhere dense, has Lebesgue measure $0$, and has full Hausdorff dimension if $(c_k)_{k\in \mathbb{N}}$ satisfies the conditions in Theorem~\ref{Theorem-main1}.  Remark that if $F \subseteq \mathbb{R}$ has positive Hausdorff dimension, then $F$ is uncountable. By substituting $c_k=2$ for all $k\in \mathbb{N}$, we have 
\begin{corollary} The set of $A>1$ satisfying that $\lfloor A^{2^k} \rfloor$ is prime-representing is nowhere dense, has Lebesgue measure 0, and has full Hausdorff dimension.
\end{corollary}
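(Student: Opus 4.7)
The plan is to deduce the corollary by specializing the two main results stated in the introduction to the constant sequence $c_k=2$ for all $k\in\mathbb{N}$, exactly as foreshadowed in the paragraph preceding the corollary.

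First I would verify the hypotheses. The constant sequence $(c_k)_{k\in\mathbb{N}}$ with $c_k=2$ is bounded (indeed, constant) and satisfies $c_k\geq 2$ for every $k$, so it fulfills the hypotheses of Theorem~\ref{Theorem-main1}, as well as the weaker condition $c_k\geq 2$ required by Matom\"{a}ki's \cite[Theorem~3]{Matomaki}. With this choice one has $C_k=c_1\cdots c_k=2^k$, so
\[
\mathcal{W}(c_k)=\{A>1:\lfloor A^{2^k}\rfloor\text{ is prime-representing}\},
\]
which is precisely the set named in the corollary.

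Next I would read off the three conclusions. Matom\"{a}ki's theorem tells us that $\mathcal{W}(c_k)$ is nowhere dense and has Lebesgue measure $0$. Theorem~\ref{Theorem-main1} tells us that $\Haus \mathcal{W}(c_k)=1$, i.e., that the set has full Hausdorff dimension. Concatenating these three properties yields the statement of the corollary.

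There is no genuine obstacle at the level of the corollary itself; the three properties are immediate consequences of the hypotheses being satisfied. All of the substantive work has been compressed into the proof of Theorem~\ref{Theorem-main1}, where one presumably builds a nested family of unions of intervals inside $\mathcal{W}(c_k)$ by iteratively selecting primes in short intervals of the shape $[n,n+n^{\gamma}]$ provided by Matom\"{a}ki's short-interval estimate, and then invokes a mass distribution (or Frostman-type) argument on the resulting Cantor-like limit set to push its Hausdorff dimension arbitrarily close to $1$.
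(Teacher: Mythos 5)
Your proposal is correct and matches the paper's intended derivation exactly: the corollary is obtained by specializing to the constant sequence $c_k=2$ (so $C_k=2^k$), invoking Matom\"aki's theorem for the nowhere-dense and measure-zero properties, and Theorem~\ref{Theorem-main1} for full Hausdorff dimension. Your closing speculation about the proof of Theorem~\ref{Theorem-main1} is also in the right spirit, though the paper uses Falconer's general Cantor-set dimension bound (Lemma~\ref{Lemma-Falconer}) rather than an explicit Frostman measure.
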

Note that we impose boundedness on $(c_k)_{k\in\mathbb{N}}$ in Theorem~\ref{Theorem-main1}. We can not remove this condition due to technical problems on calculating the Hausdorff dimension. We propose two questions which are related with this problem:
\begin{question}
If a real sequence $(c_k)_{k\in \mathbb{N}}$ is unbounded and satisfies $c_k\geq 2$ for all $k\in \mathbb{N}$, then what is the Hausdorff dimension of $\mathcal{W}(c_k)$?  
\end{question}

\begin{question}
What is the Hausdorff dimension of the set of $\alpha>1$ such that all terms \eqref{Sequence-Wright} are prime numbers?   
\end{question}
 We can not get any answers of these question.

The rest of the article is organized as follows. Firstly, in Section \ref{Section-Preparations}, we prepare with a main tool for calculating the Hausdorff dimension of a general Cantor set. In Section~\ref{Section-Simple}, we consider a simple case and show that $\Haus \mathcal{W}(3)=1$. In Section~\ref{Section-ProofTheorem}, we consider a general case and propose a lemma for constructing a general Cantor set which is a subset of $\mathcal{W}(c_k)$. In Section~\ref{Section-General}, we present a proposition for calculating the Hausdorff dimension of $\mathcal{W}(c_k)$. Finally, in Section \ref{Section-ProofMain}, we provide a proof of Theorem~\ref{Theorem-main1}.

\begin{notation}
Let $\mathbb{N}$ be the set of all positive integers, $\mathbb{Z}$ be the set of all integers, $\mathbb{Q}$ be the set of all rational numbers, and $\mathbb{R}$ be the set of all real numbers. For all sets $X$, let us $\# X$ denote the cardinality of $X$. We write $o(1)$ $(k\rightarrow \infty)$ for a quantity which goes to $0$ as $k\rightarrow \infty$.  As is customary, we often abbreviate $o(1)g(k)$ to $o(g(k))$ for a non-negative function $g(k)$. 
\end{notation}

\section{Preparations}\label{Section-Preparations} 
We firstly introduce the Hausdorff dimension. For every $U\subseteq \mathbb{R}$, write the diameter of $U$ by $\diam(U)=\sup_{x,y\in U}|x-y|$. Fix $\delta>0$. For all $F\subseteq \mathbb{R}$ and $s\in [0,1]$, we define
\[
\cH_{\delta}^s (F)=\inf \left\{\sum_{j=1}^\infty \diam(U_j)^s\colon F\subseteq \bigcup_{j=1}^\infty U_j,\ \diam(U_j)\leq \delta \text{ for all $j\in \mathbb{N}$}   \right\},
\]
and $\cH^s(F)=\lim_{\delta\rightarrow +0} \cH_\delta^s(F)$ is called the $s$-\textit{dimensional Hausdorff measure} of $F$. Further,
\[
\Haus F=\inf \{s\in [0,1] \colon \mathcal{H}^s(F)=0 \}
\]
is called the \textit{Hausdorff dimension} of $F$. We refer Falconer's book \cite{Falconer} for the readers who want to know more details on fractal dimensions.  In Falconer's book \cite[(4.3)]{Falconer}, we can see a general construction of Cantor sets and a technique to evaluate the Hausdorff dimension of them as follows: Let $[0,1]=E_0\supseteq E_1 \supseteq E_2 \cdots$ be a decreasing sequence of sets, with each $E_k$ a union of a finite number of disjoint closed intervals called $k$-th \textit{level intervals}, with each interval of $E_k$ containing at least two intervals of $E_{k+1}$, and the maximum length of $k$-th level intervals tending to $0$ as $k\rightarrow \infty$. Then let
\begin{equation}\label{Equation-Cantor_set}
F= \bigcap_{k=0}^\infty E_k.
\end{equation} 

\begin{lemma}[{\cite[Example~4.6 (a)]{Falconer}}]\label{Lemma-Falconer}
Suppose in the general construction \eqref{Equation-Cantor_set} each $(k-1)$-st level interval contains at least $m_k\geq 2$ $k$-th level intervals $(k=1,2,\ldots)$ which are separated by gaps of at least $\epsilon_k$, where $0<\epsilon_{k+1}<\epsilon_k$ for each $k$. Then  
\[
\Haus F \geq \liminf_{k\rightarrow \infty} \frac{\log (m_1\cdots m_{k-1})}{-\log(m_k\epsilon_k)}.
\]
\end{lemma}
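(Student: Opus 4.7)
The plan is to prove the lower bound via the standard mass distribution principle applied to the natural measure supported on $F$. Specifically, I would define a Borel probability measure $\mu$ on $F$ by declaring, for each $k$-th level interval $I$, that $\mu(I) = (m_1 m_2 \cdots m_k)^{-1}$, with the mass spread uniformly across the children at each stage. Because every $(k-1)$-st level interval splits into at least $m_k$ children of level $k$, this assignment is consistent and defines a probability measure on $F$ (weak limit of the restrictions of normalized Lebesgue measure on $E_k$, say). The mass distribution principle then says that if $\mu(U) \leq c\,(\diam U)^s$ for every $U$ with $\diam U$ sufficiently small, then $\mathcal{H}^s(F) \geq \mu(F)/c > 0$, whence $\Haus F \geq s$.

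Let $s$ be any number strictly less than $\liminf_{k\to\infty} \log(m_1 \cdots m_{k-1})/(-\log(m_k \epsilon_k))$. For all large $k$ this inequality rearranges to $(m_1 \cdots m_{k-1})^{-1} < (m_k \epsilon_k)^s$, or equivalently (shifting the index) $(m_1 \cdots m_k)^{-1} < (m_{k+1}\epsilon_{k+1})^s$. Given a set $U$ with small diameter $d := \diam U$, I would choose the unique $k$ with $\epsilon_{k+1} \leq d < \epsilon_k$. Since level-$k$ intervals are separated by at least $\epsilon_k > d$, the set $U$ meets at most one $k$-th level interval, and inside it the $(k+1)$-st level intervals are separated by at least $\epsilon_{k+1}$, so $U$ meets at most $\min(m_{k+1},\, 1 + d/\epsilon_{k+1}) \leq \min(m_{k+1},\, 2d/\epsilon_{k+1})$ of them. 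Hence
\[
\mu(U) \leq \min\!\left(m_{k+1},\, 2d/\epsilon_{k+1}\right)\cdot (m_1 \cdots m_{k+1})^{-1}.
\]

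The heart of the proof is a two-case bound on this quantity. If $2d/\epsilon_{k+1} \geq m_{k+1}$, then $\mu(U) \leq (m_1 \cdots m_k)^{-1} < (m_{k+1}\epsilon_{k+1})^s \leq (2d)^s$. If instead $m_{k+1}\epsilon_{k+1} > 2d$, then using $s \leq 1$ and the same key inequality one gets
\[
\mu(U) \leq 2d \cdot (m_1\cdots m_k)^{-1} \cdot (m_{k+1}\epsilon_{k+1})^{-1} < 2d\,(m_{k+1}\epsilon_{k+1})^{s-1} \leq 2d\,(2d)^{s-1} = (2d)^s.
\]
Either way $\mu(U) \leq 2^s d^s$, so the mass distribution principle yields $\Haus F \geq s$, and letting $s$ approach the liminf finishes the proof.

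The main obstacle, and really the only subtle point, is the case split in the last paragraph: one needs the exponent $s$ to appear with the right sign so that both bounds $d \geq \epsilon_{k+1}$ (a lower bound) and $d < m_{k+1}\epsilon_{k+1}/2$ (an upper bound) can be converted into $d^s$ with a uniform constant. This is what forces the particular combination $-\log(m_k \epsilon_k)$ in the denominator of the liminf and is where the hypothesis $s \leq 1$ is used. Constructing $\mu$ and invoking the mass distribution principle are routine; verifying monotonicity/consistency of the measure is automatic from the disjointness of the level intervals.
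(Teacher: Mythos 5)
The paper does not prove Lemma~\ref{Lemma-Falconer}; it cites Falconer's Example~4.6(a), whose proof is precisely the mass-distribution argument you give, so your proposal reproduces the cited proof rather than offering an alternative. One small imprecision worth fixing: a $(k-1)$-st level interval may contain \emph{more} than $m_k$ children, so the assignment $\mu(I)=(m_1\cdots m_k)^{-1}$ to every $k$-th level interval is not literally consistent; you should either prune to exactly $m_k$ children at each stage before defining $\mu$ (as Falconer does), or spread mass uniformly so that $\mu(I)\le (m_1\cdots m_k)^{-1}$ and check that the inequality version still yields $\mu(U)\le \min(m_{k+1},\,2d/\epsilon_{k+1})\,(m_1\cdots m_{k+1})^{-1}$ --- both repairs are routine and the rest of the argument, including the case split using $s\le 1$, is sound.
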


\begin{remark}\label{Remark-Falconer}
It is an exercise that Lemma~\ref{Lemma-Falconer} is still true if we replace ``closed intervals'' with ``half-open intervals'' in the construction \eqref{Equation-Cantor_set}. A proof of this exercise can be found in \cite[Lemma~2.3]{MatsusakaSaito}.
\end{remark}

\section{On a simple case}\label{Section-Simple}
 In this section, we firstly discuss a simple case because a general case is much more complicated.  We show that
\begin{equation}\label{Equation-simple}
\Haus \mathcal{W}(3)=\Haus \{A>1\colon \lfloor A^{3^k} \rfloor \text{ is prime-representing} \}=1.
\end{equation}
Let $\mathcal{P}$ be the set of all prime numbers, and let $\pi(x)= \# (\mathcal{P}\cap [1,x])$ for all $x>1$. We start with the result given by Baker, Harman, and Pintz\cite{BakerHarmanPintz}.
\begin{lemma}\label{Lemma-BakerHarmanPintz} There exists a constant $d_0>0$ such that
\[
\pi (x+x^{21/40}) -\pi (x) \geq d_0 \frac{x^{21/40}}{\log x}
\]
for sufficiently large $x>0$.
\end{lemma}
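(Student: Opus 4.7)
This statement is the main theorem of Baker, Harman, and Pintz \cite{BakerHarmanPintz}, so my ``proof proposal'' is essentially a citation: I would invoke their result as a black box and move on. Nevertheless, let me sketch at a conceptual level what goes into establishing such a prime-in-short-intervals bound, since the strength of the exponent $21/40$ dictates what the rest of the article can hope to achieve.

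The plan of Baker--Harman--Pintz is to count primes in $(x,x+x^{21/40}]$ by means of a weighted sieve, specifically a variant of Harman's alternative sieve, applied to the integers in that interval. One decomposes the characteristic function of the primes into pieces via Buchstab's identity and then estimates each piece either by direct sieve upper and lower bounds (the ``Type I'' contribution) or by reduction to bilinear (``Type II'') sums of the shape
\[
\sum_{m\sim M}\sum_{n\sim N} a_m b_n \mathbf{1}_{x < mn \leq x + x^{21/40}}.
\]
Controlling those Type II sums in so short an interval requires non-trivial cancellation in exponential sums coupled with zero-density estimates for the Riemann zeta function near the edge of the critical strip. The value $21/40 = 0.525$ is precisely the point at which the best currently available Type I and Type II information, inserted into Harman's sieve, just suffices to yield a positive lower bound once every identity has been collapsed.

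The hard part, were one to try to redo the proof from scratch, would be exactly these Type II estimates: bilinear sums forced into very short intervals require sharp bounds for Dirichlet polynomials via the Hal\'asz--Montgomery method and large-value results for $\zeta$, together with a careful bookkeeping of admissible sieve exponents. Since all of this is bundled into the single inequality above, we accept Lemma~\ref{Lemma-BakerHarmanPintz} as stated. The only features that will matter in later sections are that the exponent $21/40$ is strictly less than $1$ (so that each short interval $[n, n+n^{21/40}]$ still contains many primes) and that the implied constant $d_0 > 0$ is absolute and independent of $x$, both of which will feed directly into the Cantor-set construction used to bound $\Haus \mathcal{W}(c_k)$ from below.
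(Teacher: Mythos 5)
Your proposal is correct and matches the paper exactly: the paper does not prove this lemma but simply cites it as the main theorem of Baker, Harman, and Pintz. Your conceptual sketch of Harman's sieve and the Type I/Type II estimates is accurate background, but like the paper you are invoking the result as a black box, which is the appropriate move here.
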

 By this lemma, we have 
\begin{lemma}\label{Lemma-NumberOfPrimes}
There exists $d_1>0$ such that 
\begin{equation*}
\#([x, x+x^{2/3}]\cap \mathcal{P})\geq d_1 \frac{x^{2/3}}{\log x}
\end{equation*}
for sufficiently large $x>0$. 
\end{lemma}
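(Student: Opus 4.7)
The plan is to chop the long interval $[x, x+x^{2/3}]$ into many shorter pieces to which Lemma~\ref{Lemma-BakerHarmanPintz} directly applies, then sum the resulting lower bounds. The key numerical fact is $2/3 > 21/40$, so for large $x$ an interval of length $x^{2/3}$ is much longer than $x^{21/40}$, leaving room for roughly $x^{2/3-21/40}=x^{17/120}$ Baker--Harman--Pintz intervals inside it.

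Concretely, I would define a finite sequence $y_0,y_1,\ldots,y_N$ inductively by $y_0=x$ and $y_{i+1}=y_i+y_i^{21/40}$, stopping at the largest $N$ with $y_N\leq x+x^{2/3}$. For each $i<N$, the interval $[y_i,y_{i+1}]=[y_i,y_i+y_i^{21/40}]$ is precisely of the shape handled by Lemma~\ref{Lemma-BakerHarmanPintz}, so for $x$ large enough it contains at least $d_0\, y_i^{21/40}/\log y_i$ primes. Since $x\leq y_i\leq x+x^{2/3}\leq 2x$, we have $y_i^{21/40}\geq x^{21/40}$ and $\log y_i\leq \log(2x)\leq 2\log x$, so each subinterval contributes at least $\frac{d_0}{2}\,x^{21/40}/\log x$ primes.

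Next I would bound $N$ from below. Because every step has $y_i^{21/40}\leq (2x)^{21/40}=2^{21/40}x^{21/40}$, stopping only occurs once the total length $y_N-y_0$ exceeds $x^{2/3}-2^{21/40}x^{21/40}$, so $N\geq (x^{2/3}-2^{21/40}x^{21/40})/(2^{21/40}x^{21/40})\geq c\, x^{17/120}$ for some absolute constant $c>0$ and all sufficiently large $x$. Since the intervals $[y_i,y_{i+1}]$ are pairwise disjoint and all lie in $[x,x+x^{2/3}]$, summing yields
\[
\#\bigl([x,x+x^{2/3}]\cap \mathcal{P}\bigr)\;\geq\; N\cdot \frac{d_0}{2}\cdot \frac{x^{21/40}}{\log x}\;\geq\; \frac{c\,d_0}{2}\cdot\frac{x^{2/3}}{\log x},
\]
so setting $d_1:=c\,d_0/2$ gives the claim.

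There is no serious obstacle here; the proof is just careful bookkeeping. The only point that requires a little attention is the comparison between the exponents $21/40$ and $2/3$ at the two ends of the long interval, which is why I kept the crude bounds $y_i\leq 2x$ and $\log y_i\leq 2\log x$ rather than chasing sharp constants.
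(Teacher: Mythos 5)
Your proposal is correct and follows essentially the same strategy as the paper: tile $[x,x+x^{2/3}]$ by disjoint subintervals of Baker--Harman--Pintz length, apply Lemma~\ref{Lemma-BakerHarmanPintz} to each, and sum. The only cosmetic difference is the tiling scheme (your greedy recursion $y_{i+1}=y_i+y_i^{21/40}$ versus the paper's uniform blocks of width $2x^{21/40}$), and the resulting constants differ slightly; both are valid.
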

\begin{proof}
Let us take a sufficiently large $x>0$. For all $1\leq j \leq \frac{1}{2}x^{2/3-21/40}$, we  
have
\[   
[x+2(j-1)x^{21/40}, x+2j x^{21/40}  ) \subseteq [x,x+x^{2/3}].
\]
Since $(x+2j x^{21/40}) - (x+2(j-1)x^{21/40}) = 2x^{21/40} > (x+2(j-1)x^{21/40})^{21/40} $, Lemma~\ref{Lemma-BakerHarmanPintz} implies that
\begin{align*}
&\# ([x,x+x^{2/3}]\cap \mathcal{P}) \\
&\geq \sum_{1\leq j\leq \frac{1}{2} x^{\frac{2}{3} -\frac{21}{40}}}  \# ([x+2(j-1)x^{21/40}, x+2j x^{21/40}  )\cap \mathcal{P})\\
 &\geq  \sum_{1\leq j\leq \frac{1}{2}  x^{\frac{2}{3} -\frac{21}{40}}} d_0 \frac{(x+2(j-1)x^{21/40})^{21/40}  }{\log(x+2(j-1)x^{21/40}) } \geq  \sum_{1\leq j\leq \frac{1}{2}  x^{\frac{2}{3} -\frac{21}{40}}} d_0  \frac{x^{21/40}  }{\log x }\geq \frac{d_0}{4} \frac{x^{2/3}}{ \log x}.
\end{align*}
\end{proof}

Choose any $0<\delta<1$. Let $n_0>0$ be a sufficiently large integer depending on $\delta$, and let $p$ be a prime number with $p\geq n_0$. Define $\mathcal{I}_1=\{1\}$. Set $a(1)=p$. Let $m(1)$ be the cardinality of the set
\begin{equation}\label{Set-a(1)}
[a(1)^3 , a(1)^3 +a(1)^2  ]\cap \mathcal {P}.
\end{equation}
 By Lemma~\ref{Lemma-NumberOfPrimes} with $x= a(1)^3$, we obtain $m(1)\geq d_1 a(1)^2 \log a(1)\geq d_1 a(1)^{2-\delta}$ since $n_0$ is sufficiently large. Further, let 
 \[
 a(1,1)< a(1,2)< \cdots< a(1, m(1))
 \]
  be all elements of \eqref{Set-a(1)}.  Note that
 \[
  a(1)^3 \leq a(1,1)<\cdots <a(1,m(1)) \leq (a(1)+1)^3-1. 
 \]
 Define $\mathcal{I}_2=\{(1, j_2)\colon 1\leq j_2\leq m(1)\}$.

We repeat this argument by replacing $a(1)$ with $a(1, j_2)$ for every $1\leq j_2 \leq m(1)$. For example, let us consider the case $j_2=1$. Let $m(1,1)$ be the cardinality of the set 
\begin{equation}\label{Set-a(1,1)}
[a(1,1)^3 , a(1,1)^3 +a(1,1)^2  ]\cap \mathcal {P}.
\end{equation}
 By Lemma~\ref{Lemma-NumberOfPrimes} with $x= a(1,1)^3$, we obtain $m(1,1)\geq d_1 a(1,1)^2/(\log a(1,1))\geq a(1,1)^{2-\delta}$ since $a(1,1)\geq a(1)\geq n_0$ and $n_0$ is sufficiently large. Further, let 
 \[
 a(1,1,1)< a(1,1,2)< \cdots< a(1,1, m(1,1))
 \]
  be all elements of \eqref{Set-a(1,1)}.  We also have
 \[
 a(1,1)^3 \leq a(1,1,1)<\cdots <a(1,1,m(1)) \leq (a(1,1)+1)^3-1. 
 \]

In general, assume that $\mathcal{I}_{k-1} \subseteq \mathbb{N}^{k-1}$ is given for some integer $k\geq 2$, and a prime number $a(\mathbf{j})\geq n_0$ is also given for each $\mathbf{j}\in \mathcal{I}_{k-1}$. Choose any $\mathbf{j}\in \mathcal{I}_{k-1}$. Then let $m(\mathbf{j})$ be the cardinality of the set 
\begin{equation}\label{Set-a(j)}
[a(\mathbf{j})^3 , a(\mathbf{j})^3 +a(\mathbf{j})^2  ]\cap \mathcal {P}.
\end{equation}
 By Lemma~\ref{Lemma-NumberOfPrimes} with $x= a(\mathbf{j})^3$, we obtain $m(\mathbf{j})\geq d_1 a(\mathbf{j})^{2-\delta}$ since $n_0$ is sufficiently large. Further, let $a(\mathbf{j},1)< a(\mathbf{j},2)< \cdots< a(\mathbf{j}, m(\mathbf{j}))$ be all elements of \eqref{Set-a(j)}.  We also have
 \begin{equation}
 a(\mathbf{j})^3 \leq a(\mathbf{j}, 1)<\cdots <a(\mathbf{j},m(\mathbf{j})) \leq (a(\mathbf{j})+1)^3-1. 
 \end{equation}
 Define $\mathcal{I}_k=\{(\mathbf{j}, j_k)\colon \mathbf{j}\in \mathcal{I}_{k-1}, 1\leq j_k\leq m(\mathbf{j}) \}$. \\
 
 By induction, we have
 \begin{lemma}\label{Lemma-simpleconst}
 For every $0<\delta<1$, there exists $n_0\in \mathbb{N}$ such that for every prime number $p\geq n_0$, we can construct $\mathcal{I}_k \subseteq \mathbb{N}^k$ $(k=1,2,\ldots)$, $a: \bigcup_{k=1}^\infty \mathcal{I}_k \to \mathcal{P}$ with $a(1)=p$, and $m: \bigcup_{k=1}^\infty \mathcal{I}_k \to \mathbb{N}$ satisfying the following:
\begin{itemize}
\item[(A1)] $\mathcal{I}_1=\{1\}$, $\mathcal{I}_{k}=\{(\mathbf{j}, j )\colon  \mathbf{j}\in \mathcal{I}_{k-1}, 1\leq j\leq m(\mathbf{j})\}$ for all $k\geq 2;$
\item[(A2)] for all $k\geq 2$ and for all $\mathbf{j} \in \mathcal{I}_{k-1}$,
\[
a(\mathbf{j})^{3} \leq a(\mathbf{j},1) <\cdots <a(\mathbf{j}, m(\mathbf{j}))\leq (a(\mathbf{j})+1)^{3}-1;   
\]
\item[(A3)] for all $k\geq 2$ and for all distinct $\mathbf{j}, \mathbf{j}' \in \mathcal{I}_{k-1}$, $|a(\mathbf{j})-a(\mathbf{j}')|\geq 2;$
\item[(A4)] for all $k\geq 2$ and for all $\mathbf{j} \in \mathcal{I}_{k-1}$, $m(\mathbf{j}) \geq d_1  a(\mathbf{j})^{2-\delta}$.
\end{itemize}
\end{lemma}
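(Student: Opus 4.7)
The plan is a direct induction on $k$ that formalizes the construction sketched immediately above the lemma statement; the only upfront choice is how to fix $n_0$. I would pick $n_0$ large enough so that for every $x\geq n_0$ both Lemma~\ref{Lemma-NumberOfPrimes} is applicable at $x^3$ and the elementary inequality $3\log x\leq x^{\delta}$ holds. Once $n_0$ is pinned down in this way, the rest runs essentially on autopilot.

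For the base case, set $\mathcal{I}_1=\{1\}$ and $a(1)=p$, where $p\in\mathcal{P}$ and $p\geq n_0$ by hypothesis. For the inductive step, assume that $\mathcal{I}_{k-1}$ together with primes $a(\mathbf{j})\geq n_0$ for every $\mathbf{j}\in\mathcal{I}_{k-1}$ has been constructed. Applying Lemma~\ref{Lemma-NumberOfPrimes} with $x=a(\mathbf{j})^3$ yields
\[
\#\bigl([a(\mathbf{j})^3,\,a(\mathbf{j})^3+a(\mathbf{j})^2]\cap\mathcal{P}\bigr)\ \geq\ d_1\,\frac{a(\mathbf{j})^2}{3\log a(\mathbf{j})}\ \geq\ d_1\, a(\mathbf{j})^{2-\delta}
\]
by the choice of $n_0$. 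Define $m(\mathbf{j})$ as this cardinality, enumerate the primes in increasing order as $a(\mathbf{j},1)<\cdots<a(\mathbf{j},m(\mathbf{j}))$, and set $\mathcal{I}_k=\{(\mathbf{j},j)\colon\mathbf{j}\in\mathcal{I}_{k-1},\ 1\leq j\leq m(\mathbf{j})\}$. Since every newly chosen prime satisfies $a(\mathbf{j},j)\geq a(\mathbf{j})^3\geq n_0$, the induction continues indefinitely.

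It then remains to verify (A1)--(A4). Conditions (A1) and (A4) are built into the construction, and (A2) reduces to the algebraic inequality $a(\mathbf{j})^3+a(\mathbf{j})^2\leq(a(\mathbf{j})+1)^3-1$, which is immediate. Property (A3) is the only conceptual point: for distinct $\mathbf{j},\mathbf{j}'\in\mathcal{I}_{k-1}$ I would look at the first coordinate $i$ at which they disagree; at level $i$ the two prefixes give two distinct entries of the same increasing enumeration $a(\mathbf{i},1)<\cdots<a(\mathbf{i},m(\mathbf{i}))$, so the primes obtained there are different, and then (A2) propagates this distinctness through all later levels because the integer intervals $[a(\mathbf{i})^3,(a(\mathbf{i})+1)^3-1]$ attached to different base primes $a(\mathbf{i})$ are pairwise disjoint. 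Hence $a(\mathbf{j})\neq a(\mathbf{j}')$; both primes are odd (since $n_0\geq 3$), so $|a(\mathbf{j})-a(\mathbf{j}')|\geq 2$.

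I do not foresee a genuine obstacle. The only place requiring mild care is bundling the largeness requirements on $n_0$ (the threshold in Lemma~\ref{Lemma-NumberOfPrimes} together with the polynomial--logarithmic comparison $3\log x\leq x^{\delta}$) into a single condition before starting the induction; after that the construction is a bookkeeping exercise.
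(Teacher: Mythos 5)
Your proof is correct and follows the same route as the paper, which simply formalizes by induction the explicit construction given before the lemma statement. You are in fact slightly more careful than the paper on two points: you make explicit the factor $3$ coming from $\log(a(\mathbf{j})^3)=3\log a(\mathbf{j})$ in the application of Lemma~\ref{Lemma-NumberOfPrimes}, and you spell out the verification of (A3) via disjointness of the intervals $[a(\mathbf{i})^3,(a(\mathbf{i})+1)^3-1]$ together with oddness of the primes, whereas the paper leaves these as implicit.
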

\begin{proof}[Proof of \eqref{Equation-simple}]
Let us take any $0<\delta<1$. By Lemma~\ref{Lemma-simpleconst},  for every prime number $p\geq n_0$, we construct $\mathcal{I}_k \subseteq \mathbb{N}^k$ $(k=1,2,\ldots)$, $a: \bigcup_{k=1}^\infty \mathcal{I}_k \to \mathcal{P}$ with $a(1)=p$, and $m: \bigcup_{k=1}^\infty \mathcal{I}_k \to \mathbb{N}$ satisfying (A1) to (A4). We define
\[
W= \bigcap_{k=1}^\infty \bigcup_{\mathbf{j}\in \mathcal{I}_k } L(\mathbf{j}),\quad L(\mathbf{j}):= [a(\mathbf{j})^{1/3^{k}}, (a(\mathbf{j})+1)^{1/3^{k}}   ).
\] 
If $A\in W$, then for all $k\in \mathbb{N}$, there exists $\mathbf{j}\in \mathcal{I}_k $ such that
\[
 a(\mathbf{j})^{1/3^k}\leq  A< (a(\mathbf{j})+1)^{1/3^k},
\] 
which yields that $\lfloor A^{3^k} \rfloor$ is a prime number for each $k\in \mathbb{N}$. Hence we have
\[
\Haus W \leq \Haus \mathcal{W}(3)
\] 
by the monotonicity of the Hausdorff dimension. Note that the union $\bigcup_{\mathbf{j}\in \mathcal{I}_k } L(\mathbf{j})$ is finite and disjoint for each $k\in \mathbb{N}$. Additionally, by (A2) for each integer $k\geq 2$, $\mathbf{j}\in \mathcal{I}_{k-1}$, and $1\leq j_k\leq m(\mathbf{j})$, we have
\[
 [a(\mathbf{j},j_k)^{1/3^{k}}, (a(\mathbf{j},j_k)+1)^{1/3^{k}}  )\subseteq  [a(\mathbf{j})^{1/3^{k-1}}, (a(\mathbf{j})+1)^{1/3^{k-1}}   ).
 \]
 This yields that $\bigcup_{\mathbf{j}\in \mathcal{I}_{k+1} } L(\mathbf{j}) \subseteq \bigcup_{\mathbf{j}\in \mathcal{I}_k } L(\mathbf{j})$ for every $k\in \mathbb{N}$. Therefore the way of the construction of $W$ is same as \eqref{Equation-Cantor_set}. Let us take any integer $k\geq 2$ and choose $\mathbf{j}=(j_1,\ldots,j_{k-1})\in \mathcal{I}_{k-1}$. Each $(k-1)$-st level interval $L(\mathbf{j})$ contains at least $m(\mathbf{j})$ $k$-th level intervals. By (A2), we see that
\[
m(\mathbf{j}) \geq d_1 a(\mathbf{j})^{2-\delta} \geq d_1 a(j_1,\ldots, j_{k-2})^{3(2-\delta)} \geq \cdots\geq  d_1 a(1)^{3^{k-2}(2-\delta) } .
\] 
Therefore each $(k-1)$-st level interval contains at least $m_k$ $k$-th level intervals, where $m_k:=d_1 p^{3^{k-2}(2-\delta) }$. Further, from (A2) and (A3), for each $1\leq j_k <m(\mathbf{j})$, $k$-th level intervals $L(\mathbf{j},{j_k})$ and $L(\mathbf{j}, j_k+1)$ are separated by gaps of 
\begin{align*}
&a(\mathbf{j},j_k+1)^{1/3^{k}} -(a(\mathbf{j},j_k)+1)^{1/3^{k}} \geq \frac{1}{3^k}  a(\mathbf{j},j_k+1)^{1/3^{k}-1}\geq \frac{1}{3^k}  (a(j_1,\ldots, j_{k-1})+1) ^{1/3^{k-1}-3}\\
&\geq \frac{1}{3^k}  (a(j_1,\ldots, j_{k-2})+1) ^{1/3^{k-2}-3^2} \geq \cdots \geq \frac{1}{3^k}  (a(1)+1) ^{1/3-3^{k-1}}.
\end{align*} 
This yields that  $k$-th level intervals are separated by gaps of at least $\epsilon_k$, where $\epsilon_k:=\frac{1}{3^k}  (p+1) ^{1/3-3^{k-1}}$.  By applying Lemma~\ref{Lemma-Falconer} to $W$, we obtain
\begin{align*}
\Haus W &\geq \liminf_{k\rightarrow \infty}  \frac{\log(m_2m_3\cdots m_{k-1}) }{-\log(\epsilon_k m_k) }\\
&= \liminf_{k\rightarrow \infty}  \frac{\log(d_1^{k-2} p^{(1+3+\cdots +3^{k-3})(2-\delta) }   ) }{\log(3^k (p+1)^{3^{k-1}-1/3} d_1^{-1} p ^{-3^{k-2}(2-\delta) } ) }\\
&= \frac{(1-\delta/2)\log p }{(1+\delta)\log p +3 \log (1+1/p)} \geq \frac{1-\delta/2 }{1+\delta +3/(p\log p) }.
\end{align*}
Therefore for every $0<\delta<1$, there exists $n_0\in \mathbb{N}$ such that for all $p\geq n_0$, we have  
\[
\Haus \mathcal{W}(3) \geq \frac{1-\delta/2 }{1+\delta +3/(p\log p) }.
\]  
By taking $p\rightarrow \infty$, and $\delta\rightarrow 0$, we obtain $\Haus \mathcal{W}(3)\geq 1$. Hence we conclude 
\eqref{Equation-simple} since $\Haus \mathcal{W}(3) \leq \Haus \mathbb{R} =1$.   
 \end{proof}

\section{On a general case}\label{Section-ProofTheorem}
In this section, we study a general case, and construct $\mathcal{I}_k$, $m$, $a$ as we discussed in Section~\ref{Section-Simple}. More precisely, we will prove the following lemma at the end of this section:
\begin{lemma}\label{Lemma-construction} Let $R>0$, and let $(c_k)_{k\in \mathbb{N}}$ be a real sequence of real numbers satisfying $2\leq c_k\leq R$ for every $k\in \mathbb{N}$. Then there exists $M_0=M_0(R)\in \mathbb{N}$ such that for every $M\geq M_0$, we can find a prime number $p_1\in [M,2M]$, and we can construct $\mathcal{I}_k \subseteq \mathbb{N}^k$ $(k=1,2,\ldots)$, $a: \bigcup_{k=1}^\infty \mathcal{I}_k \to \mathcal{P}$ with $a(1)=p_1$, and $m: \bigcup_{k=1}^\infty \mathcal{I}_k \to \mathbb{N}$ satisfying the following:
\begin{itemize}
\item[(B1)] $\mathcal{I}_1=\{1\}$, $\mathcal{I}_{k}=\{(\mathbf{j}, j )\colon  \mathbf{j}\in \mathcal{I}_{k-1}, 1\leq j\leq m(\mathbf{j})\}$ for all $k\geq 2;$
\item[(B2)] for all $k\geq 2$ and for all $\mathbf{j} \in \mathcal{I}_{k-1}$,
\[
a(\mathbf{j})^{c_{k}} \leq a(\mathbf{j},1) <\cdots <a(\mathbf{j}, m(\mathbf{j}))\leq (a(\mathbf{j})+1)^{c_{k}}-1;   
\]
\item[(B3)] for all $k\geq 2$ and for all distinct $\mathbf{j}, \mathbf{j}' \in \mathcal{I}_{k-1}$, $|a(\mathbf{j})-a(\mathbf{j}')|\geq 2;$
\item[(B4)] there exists an absolute constant $d_2>0$ such that for all $k\geq 2$ and for all $\mathbf{j} \in \mathcal{I}_{k-1}$, 
\[
m(\mathbf{j}) \geq d_2  a(\mathbf{j})^{c_{k}-1}(c_{k}\log a(\mathbf{j}))^{-1} .
\]
\end{itemize}
\end{lemma}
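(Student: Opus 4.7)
The plan is to adapt the tree-construction of Section~\ref{Section-Simple} to variable exponents $c_{k}\in[2,R]$. The new difficulty is that for $c_{k}\in[2,3)$ the interval $[a^{c_{k}},(a+1)^{c_{k}}-1]$ has length of order $a^{c_{k}-1}<a^{2c_{k}/3}$, so Lemma~\ref{Lemma-NumberOfPrimes} can no longer supply primes in every such interval for every sufficiently large $a$. The replacement I would use is a Matom\"{a}ki-type short-interval estimate inspired by \cite{Matomaki}: uniformly in $c\in[2,R]$, the integers $a\in[X,2X]$ satisfying
\[
\# \bigl( [a^{c},(a+1)^{c}-1] \cap \mathcal{P} \bigr) \geq d_{2}\,\frac{a^{c-1}}{c\log a}
\]
(for some absolute $d_{2}>0$) account for all but $o(X/\log X)$ of $[X,2X]$ as $X\to\infty$. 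Call such an $a$ \emph{$c$-good}. The boundedness $c_{k}\leq R$ enters precisely to keep $d_{2}$ and the $o(\cdot)$ rate uniform in $c$.

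Given this input, I would proceed inductively. Choose $M_{0}$ large; for $M\geq M_{0}$, pick any prime $p_{1}\in[M,2M]$ that is $c_{2}$-good, which is possible since $c_{2}$-bad integers in $[M,2M]$ have density $o(1/\log M)$ while primes have density $\sim 1/\log M$. Set $a(1)=p_{1}$ and $\mathcal{I}_{1}=\{1\}$. Suppose $\mathcal{I}_{k-1}$ and $c_{k}$-good primes $a(\mathbf{j})$ have been defined. For each $\mathbf{j}\in\mathcal{I}_{k-1}$, list the primes in $[a(\mathbf{j})^{c_{k}},(a(\mathbf{j})+1)^{c_{k}}-1]$; by $c_{k}$-goodness there are at least $d_{2}\,a(\mathbf{j})^{c_{k}-1}/(c_{k}\log a(\mathbf{j}))$ of them. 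From this list I would discard the primes which are not $c_{k+1}$-good; absorbing the $o(1)$ loss by halving $d_{2}$, at least $(d_{2}/2)\,a(\mathbf{j})^{c_{k}-1}/(c_{k}\log a(\mathbf{j}))$ primes survive. Label these $a(\mathbf{j},1)<\cdots<a(\mathbf{j},m(\mathbf{j}))$ in increasing order and define $\mathcal{I}_{k}$ by (B1). Properties (B1), (B2), (B4) are immediate; (B3) is automatic since all the $a(\mathbf{j})$ exceed $2$ and thus differ pairwise by at least $2$.

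The main obstacle is establishing the uniform short-interval estimate above. Matom\"{a}ki's arguments in \cite{Matomaki} were designed to produce primes in such intervals for a positive-proportion subset of $a$ in any dyadic range, and must be sharpened here in two directions: (i) to a lower bound on the \emph{number} of primes in each good interval, and (ii) to a sufficiently rapidly decaying exceptional set so that, when the primes in a given short interval are further constrained to be good for the next exponent, enough remain to sustain the recursion. Both steps require full uniformity in $c\in[2,R]$, which is where the boundedness hypothesis of Theorem~\ref{Theorem-main1} enters. Once the lemma is in place, the tree-theoretic bookkeeping here reduces to a direct adaptation of Section~\ref{Section-Simple}.
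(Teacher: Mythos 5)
Your overall strategy is the same as the paper's: build a tree of ``good'' primes via a Matom\"{a}ki-type exceptional-set estimate, using the boundedness $c_k\leq R$ to keep everything uniform in $c$. However, the exceptional-set rate you propose is too weak to sustain the recursion, and this is not a minor sharpening issue --- it is where all the arithmetic content lies.

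You assert that the integers $a\in[X,2X]$ for which $[a^c,(a+1)^c]$ contains fewer than $d_2 a^{c-1}/(c\log a)$ primes number only $o(X/\log X)$, and then, in the inductive step, you discard the $c_{k+1}$-bad primes from the list of primes in $[a(\mathbf{j})^{c_k}, a(\mathbf{j})^{c_k}+a(\mathbf{j})^{c_k-1}]$ and claim to ``absorb the $o(1)$ loss by halving $d_2$.'' This does not follow. Writing $X=a(\mathbf{j})^{c_k}$, the list you are pruning has roughly $X^{(c_k-1)/c_k}/\log X\leq X^{1/2}/\log X$ elements, which is \emph{smaller} than your exceptional-set bound $o(X/\log X)$: nothing in the hypothesis prevents the bad set from swallowing the entire list. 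You flag this tension in your closing paragraph (``must be sharpened \dots to a sufficiently rapidly decaying exceptional set''), but the middle paragraph presents the recursion as if the $o(X/\log X)$ rate were already sufficient, and it is not.

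The paper resolves this by quoting a power-saving exceptional-set bound \cite[Lemma~9]{Matomaki} (Lemma~\ref{Lemma-Matomaki} here): for $\gamma\in[1/2,1]$ the number of bad intervals $[n,n+n^\gamma]\subseteq[x,2x]$ is $O(x^{2/3-\gamma})$. Applied with $\gamma=(c-1)/c$, $x=X^c$ this gives $O(X^{1-c/3})\leq O(X^{1/3})$ bad $a$ in $[X,2X]$. Since $X^{1/3}$ is genuinely smaller than $X^{1/2}/\log X$, the pruning loses only a vanishing proportion, and this is exactly what Lemma~\ref{Lemma-iteration} packages: from $BX^\beta/\log X$ admissible intervals with $\beta\geq 1/2$, at least $BX^\beta/(2\log X)$ survive. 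So the missing ingredient in your proposal is precisely this power-saving estimate; with it in hand the tree bookkeeping you describe (including your correct observations on (B1)--(B4), and (B3) via oddness of the primes) goes through as in the paper.
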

We will see that the properties (B1) to (B4) imply $\Haus \mathcal{W}(c_k)=1$ in Section~\ref{Section-ProofMain}. 
In the case $c_k\geq 2$, Lemma~\ref{Lemma-BakerHarmanPintz} does not work. Alternatively, we use the following useful lemma given by Matom\"{a}ki: 
\begin{lemma}\label{Lemma-Matomaki}
There exist positive constants $d < 1$ and $D$ such that, for every
sufficiently large $x$ and every $\gamma \in [1/2, 1]$, the interval $[x, 2x]$ contains at most
$Dx^{2/3-\gamma}$ disjoint intervals $[n, n + n^\gamma]$ for which
\[
\pi (n+n^\gamma) -\pi (n) \leq \frac{dn^\gamma}{ \log n}.
\]
\end{lemma}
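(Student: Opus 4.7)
The plan is to reduce the counting of bad intervals to a mean-square estimate for $\pi(y+y^\gamma) - \pi(y)$, via a Chebyshev-type variance argument. The key analytic input I would use is a second-moment bound of the shape
\[
\int_x^{2x} \left( \pi(y+y^\gamma) - \pi(y) - \frac{y^\gamma}{\log y} \right)^{\!2} dy \ll \frac{x^{1+\gamma}}{(\log x)^A},
\]
valid uniformly for $\gamma \in [1/2, 1]$ and any fixed $A > 0$. Variance bounds of this shape are classical consequences of zero-density estimates for $\zeta(s)$ (Huxley, Saffari--Vaughan, and refinements); in the range $\gamma \geq 1/2$ they hold unconditionally with substantial slack.

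To convert this into the claimed counting bound, I would set $f(y) = \pi(y+y^\gamma) - \pi(y)$ and $g(y) = y^\gamma/\log y$, and use two elementary facts: the number of primes in an interval of length $h$ is $O(h/\log y)$ by Chebyshev, hence $|f(y+h) - f(y)| = O(h/\log y)$ for $0 \leq h \leq y^\gamma$; and $g$ itself varies by only $O(h\, y^{\gamma-1}/\log y)$ on the same scale. If $[n, n+n^\gamma]$ is a bad interval, so that $f(n) \leq d\, g(n)$ with $d < 1$, then for some small absolute $c_0 > 0$ depending only on $d$, the deviation $|f(y) - g(y)|$ remains $\geq \tfrac{1-d}{2}\, g(y) \asymp y^\gamma/\log y$ throughout the sub-interval $[n, n + c_0 n^\gamma]$. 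Each bad interval therefore contributes at least $\gg x^{3\gamma}/(\log x)^2$ to the integral, and because the sub-intervals lie inside the disjoint original intervals, the contributions are additive.

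Combining the two, if $N$ denotes the number of bad intervals in $[x,2x]$, then
\[
N \cdot \frac{x^{3\gamma}}{(\log x)^2} \ll \frac{x^{1+\gamma}}{(\log x)^A},
\]
which yields $N \ll x^{1 - 2\gamma} (\log x)^{2 - A}$. Since $1 - 2\gamma \leq 2/3 - \gamma$ throughout $\gamma \in [1/2, 1]$ (equality at $\gamma = 1/3$), this is in fact stronger than the stated $Dx^{2/3-\gamma}$; choosing $A$ large absorbs the logarithms into the constant $D$, and any fixed $d < 1$ (e.g.\ $d = 1/2$) works once the implicit constant in the mean-square bound is fixed.

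The hard part is squarely the mean-square input: it is the only step where genuine analytic number theory (zero-density theorems for $\zeta(s)$, or equivalently explicit-formula arguments controlling the oscillatory part of $\psi(y+y^\gamma) - \psi(y) - y^\gamma$) enters, and some care is needed to ensure the bound is uniform in $\gamma$ over $[1/2, 1]$. Once that is in hand, Steps 2 and 3 are routine variance/Chebyshev manipulation, and the relative weakness of the target exponent $2/3 - \gamma$ (as opposed to $1 - 2\gamma$) leaves ample room to absorb logarithmic losses and the dependence on $d$ into the constant $D$.
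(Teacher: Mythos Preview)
The paper does not prove this lemma at all: its entire proof is the line ``See \cite[Lemma~9]{Matomaki}.'' So there is no argument in the paper to compare against, only Matom\"aki's original.

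Your reduction from a variance bound to a count of bad intervals is clean and correct: if $[n,n+n^\gamma]$ is bad then the deviation persists on a sub-interval of length $\asymp n^\gamma$, contributing $\gg x^{3\gamma}/(\log x)^2$ to the second moment, and disjointness makes these contributions additive. The gap is in the analytic input. The estimate
\[
\int_x^{2x}\Bigl(\pi(y+y^\gamma)-\pi(y)-\tfrac{y^\gamma}{\log y}\Bigr)^2\,dy \ \ll\ \frac{x^{1+\gamma}}{(\log x)^A}
\]
is Selberg's bound \emph{under RH}; unconditionally the Huxley/Saffari--Vaughan results you cite give only
\[
\int_x^{2x}\Bigl(\psi(y+h)-\psi(y)-h\Bigr)^2\,dy \ \ll\ \frac{x\,h^2}{(\log x)^A}
\qquad (h\ge x^{1/6+\varepsilon}),
\]
i.e.\ $x^{1+2\gamma}/(\log x)^A$, a saving of an arbitrary log-power over the trivial bound but no power of $x$. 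Fed into your Chebyshev step this yields only $N\ll x^{1-\gamma}$, which is \emph{weaker} than the target $x^{2/3-\gamma}$ for every $\gamma\in[1/2,1]$. A quick sanity check: at $\gamma=1/2$ your argument would output $N\ll x^{1-2\gamma}=O(1)$, while Matom\"aki states only $N\ll x^{1/6}$; had a mean-square bound of your strength been available she would certainly have recorded the stronger conclusion.

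Matom\"aki's own proof (in the cited paper) does not pass through the second moment. It follows Heath-Brown's method of bounding the exceptional set directly from zero-density estimates for $\zeta(s)$, and the exponent $2/3$ is exactly what that zero-density input produces. So your plan needs either a genuinely stronger unconditional variance estimate for the full range $\gamma\ge 1/2$ (not currently known), or a switch to the direct zero-density argument.
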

\begin{proof}
See \cite[Lemma~9]{Matomaki}.
\end{proof}
Let $p$ be a variable running over the set $\mathcal{P}$.  By this lemma, we obtain
\begin{lemma}\label{Lemma-iteration}Let $d$ be so as in Lemma~\ref{Lemma-Matomaki}. Let $B>0$,  $\beta\geq 1/2$, $c\geq 2$, and $X\geq 2$. Assume that at least $B X^\beta ( \log X)^{-1}$ intervals $[p^{c}, p^{c}+p^{c-1} ]$ are completely contained in $[X^c,2X^c]$. Then there exists $M_0'=M_0'(B)>0$ such that if $X\geq M_0'$, then at least $B X^\beta( 2\log X)^{-1}$ intervals $[p^{c}, p^{c}+p^{c-1} ] \subseteq [X^c,2X^c] $ satisfy
 \[
\#([p^{c}, p^{c}+p^{c-1} ] \cap \mathcal{P} )> \frac{d p^{c-1}}{c\log p}.
\]
\end{lemma}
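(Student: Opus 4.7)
The plan is to apply Lemma~\ref{Lemma-Matomaki} with $x = X^{c}$ and $\gamma = 1 - 1/c$. Since $c \geq 2$, one has $\gamma \in [1/2, 1)$, so the range hypothesis on $\gamma$ is met. For each prime $p$, setting $n = p^{c}$ gives $n^{\gamma} = p^{c-1}$ and $\log n = c \log p$, so the interval $[p^{c}, p^{c} + p^{c-1}]$ is literally $[n, n + n^{\gamma}]$, and the sparse-prime condition in Matom\"{a}ki's lemma,
\[
\pi(n + n^{\gamma}) - \pi(n) \leq \frac{d n^{\gamma}}{\log n},
\]
translates exactly to $\pi(p^{c} + p^{c-1}) - \pi(p^{c}) \leq d p^{c-1}/(c \log p)$, the negation of the desired inequality.

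Next I would verify that the family $\{[p^{c}, p^{c} + p^{c-1}]\}_{p \in \mathcal{P}}$ is pairwise disjoint, so that its ``bad'' subfamily falls under the scope of Lemma~\ref{Lemma-Matomaki}. For consecutive primes $p < p'$, Bernoulli's inequality gives $p'^{c} \geq (p+1)^{c} \geq p^{c} + c p^{c-1} \geq p^{c} + 2 p^{c-1}$, using $c \geq 2$. Consequently, the number of bad intervals of this form contained in $[X^{c}, 2X^{c}]$ is bounded by
\[
D (X^{c})^{2/3 - \gamma} = D X^{c(2/3 - 1 + 1/c)} = D X^{1 - c/3} \leq D X^{1/3},
\]
again invoking $c \geq 2$.

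Subtracting this from the hypothesized total count, the number of good intervals is at least $B X^{\beta}(\log X)^{-1} - D X^{1/3}$, and it suffices to ensure $D X^{1/3} \leq B X^{\beta}(2 \log X)^{-1}$, i.e.
\[
\frac{X^{\beta - 1/3}}{\log X} \geq \frac{2D}{B}.
\]
Since $\beta \geq 1/2$ forces $\beta - 1/3 \geq 1/6$, the left-hand side tends to $\infty$ with $X$, so the inequality holds as soon as $X$ exceeds a threshold $M_{0}'$ depending only on $B$ (and the absolute constants $d, D$ of Lemma~\ref{Lemma-Matomaki}).

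The only delicate point — hardly a real obstacle — is verifying that $M_{0}'$ depends only on $B$, not on $c$ or $\beta$. This uniformity is forced by the two structural inequalities $c \geq 2$ and $\beta \geq 1/2$: the former caps the bad-interval exponent at $1/3$, while the latter keeps the main-term exponent at least $1/2$, opening a uniform polynomial gap of $X^{1/6}$ that easily absorbs the $\log X$ factor.
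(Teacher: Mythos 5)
Your proof is correct and follows essentially the same route as the paper: apply Lemma~\ref{Lemma-Matomaki} with $x=X^c$ and $\gamma=(c-1)/c$, bound the bad-interval count by $DX^{1-c/3}\le DX^{1/3}$ using $c\ge 2$, and then use $\beta\ge 1/2$ to open a uniform $X^{1/6}$ gap that absorbs $\log X$ and makes $M_0'$ depend only on $B$. One small improvement over the paper's writeup: you explicitly verify via Bernoulli's inequality that the intervals $[p^c,p^c+p^{c-1}]$ are pairwise disjoint, a hypothesis that Lemma~\ref{Lemma-Matomaki} requires but that the paper leaves implicit.
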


\begin{proof}
We observe that $(c-1)/c=1-1/c\in [1/2,1]$ by the condition $c\geq 2$. Thus by Lemma~\ref{Lemma-Matomaki} with $\gamma =(c-1)/c$ and $x=X^c$, the number intervals $[p^{c} ,p^{c}+p^{c-1}]\subseteq [X^c,2X^c]$ such that 
\[
\# ([p^{c},p^{c}+p^{c-1}]\cap \mathcal{P}) >  \frac{dp^{c-1}}{c \log p}
\]
is at least
\begin{align*}
&\frac{BX^\beta }{\log X }  -DX^{c(2/3- (c-1)/c ) }  = \frac{BX^\beta }{\log X }  -DX^{ 1-c/3 }  \geq \frac{BX^\beta }{\log X }  -DX^{ 1/3 }\\
&\geq \frac{BX^\beta }{\log X }  \left(1-\frac{D}{B} X^{1/3-1/2} \log X \right)\geq \frac{BX^\beta}{2\log X}
\end{align*}
if $X\geq M_0'$ and $M_0'$ is sufficiently large. 
\end{proof}

\begin{proof}[Proof of Lemma~\ref{Lemma-construction}]
Let $(c_k)_{k\in \mathbb{N}}$ be a real sequence satisfying $2\leq c_k\leq R $ for all $k\in \mathbb{N}$. Let $M_0$ be a sufficiently large parameter depending on $R$, and let us take $M\geq M_0$. Since each $p\in [M,(3/2)^{1/c_2}M]$ satisfies 
\[
M^{c_2 }\leq p^{c_2} \leq p^{c_2}+p^{c_2-1} =p^{c_2} (1+1/p)\leq (3/2) M^{c_2} (1 +1/M_0)\leq 2M^{c_2}
\]
if $M_0$ is sufficiently large. Hence by the prime number theorem, the number of intervals $[p^{c_2},p^{c_2}+p^{c_2-1}] \subseteq [M^{c_2}, 2M^{c_2}]$ is at least
\[
\# ([M,(3/2)^{1/c_2}M]\cap \mathcal{P}) \geq ((3/2)^{1/c_2} -1) \frac{M}{2\log M}.
\]
Therefore, by Lemma~\ref{Lemma-iteration} with $B=((3/2)^{1/c_2}-1)/2$, $\beta=1$, $c=c_2$, and $X=M$, there exists a prime number $p_1\in [M,2M]$ such that
\begin{gather*}
 [p_1^{c_2},p_1^{c_2}+p_1^{c_2-1}]\subseteq [M^{c_2}, 2M^{c_2}], \\
\# ([p_1^{c_2},p_1^{c_2}+p_1^{c_2-1}]\cap \mathcal{P}) >  \frac{dp_1^{c_2-1}}{c_2 \log p_1}.
\end{gather*}
Let us fix such a prime number $p_1$, and set $a(1)=p_1$. Define 
\begin{equation}\label{Equation-I1}
\mathcal{I}_1=\{1\}.
\end{equation}
	
Assume that $\mathcal{I}_{k-1} \subseteq \mathbb{N}^{k-1}$ is given for some integer $k\geq 2$, and a prime number $a(\mathbf{j})\geq M$ is also given for each $\mathbf{j}\in \mathcal{I}_{k-1}$. Additionally, suppose that
\[
\#([a(\mathbf{j})^{c_{k}},a(\mathbf{j})^{c_{k}}+a(\mathbf{j})^{c_{k}-1}] \cap \mathcal{P}) >  \frac{da(\mathbf{j})^{c_{k}-1}}{c_{k} \log a(\mathbf{j})}
\] 
for each $\mathbf{j}\in \mathcal{I}_{k-1}$. Let us choose any $\mathbf{j}\in \mathcal{I}_{k-1}$. Then let
\[
\mathcal{A}(\mathbf{j})=[a(\mathbf{j})^{c_{k}},a(\mathbf{j})^{c_{k}}+a(\mathbf{j})^{c_{k}-1}],\quad \mathcal{P}(\mathbf{j})=\mathcal{A}(\mathbf{j}) \cap \mathcal{P}.
\]
For all $p\in \mathcal{P}(\mathbf{j})$, intervals $[p^{c_{k+1}}, p^{c_{k+1}} +p^{c_{k+1}-1}]$ are subsets of $[a(\mathbf{j})^{c_{k} c_{k+1}}, 2a(\mathbf{j})^{c_{k} c_{k+1}} ]$. To see why, it is clear that $a(\mathbf{j})^{c_{k}c_{k+1}} \leq p^{c_{k+1}}$, and we see that
\begin{align*}
p^{c_{k+1}} +p^{c_{k+1}-1} &= p^{c_{k+1}} (1+p^{-1}) \leq (a(\mathbf{j})^{c_{k}} + a (\mathbf{j})^{c_{k}-1} )^{c_{k+1}}(1+M^{-1})\\
& \leq  a(\mathbf{j})^{c_k c_{k+1}} (1+ M_0^{-1})^R(1+M_0^{-1}) \leq 2 a(\mathbf{j})^{c_{k} c_{k+1}}
\end{align*}
if $M_0$ is sufficiently large. Therefore, Lemma~\ref{Lemma-iteration} with $c=c_{k+1}$, $\beta =(c_{k}-1)/c_{k}  $, $B=d$, $X=a(\mathbf{j})^{c_{k} }$ implies that at least $d a(\mathbf{j})^{c_{k}-1} ( 2 c_{k}\log a(\mathbf{j}))^{-1}$ prime numbers $p\in \mathcal{P}(\mathbf{j})$ satisfy 
\[
\#([p^{c_{k+1}}, p^{c_{k+1}}+p^{c_{k+1}-1} ] \cap \mathcal{P} )> \frac{d p^{c_{k+1}-1}}{c_{k+1}\log p}.
\]
Let $m(\mathbf{j})$ be the cardinality of the set 
\[
\left\{p\in \mathcal{P}(\mathbf{j}) \colon \#([p^{c_{k+1}}, p^{c_{k+1}}+p^{c_{k+1}-1} ] \cap \mathcal{P} )> \frac{d p^{c_{k+1}-1}}{c_{k}\log p}\right\},
\]
and let $a(\mathbf{j},1 )<a(\mathbf{j},2) <\cdots < a(\mathbf{j}, m(\mathbf{j}))$ be the all elements of the set. Then we have
\begin{gather}
a(\mathbf{j})^{c_{k}}\leq a(\mathbf{j},1 )<a(\mathbf{j},2) <\cdots < a(\mathbf{j}, m(\mathbf{j}))\leq (a(\mathbf{j})+1)^{c_{k}}-1,\label{Inequality-a(j)} \\
m(\mathbf{j})\geq da(\mathbf{j})^{c_{k}-1} (2c_{k}\log a(\mathbf{j}))^{-1}. \label{Inequality-m(j)}
\end{gather}
Further, we define
\begin{equation}\label{Equation-Ik}
 \mathcal{I}_{k}=\{(\mathbf{j}, j_{k} ) \colon \mathbf{j}\in \mathcal{I}_{k-1}, 1\leq j_{k}\leq m(\mathbf{j})   \} .
 \end{equation}
 By induction, we construct 
 \begin{gather*}
 \mathcal{I}_k \subseteq \mathbb{N}^k \quad \text{(for all $k\in \mathbb{N}$)}, \quad 
 m: \bigcup_{k=1}^\infty \mathcal{I}_k \rightarrow \mathbb{N},\quad a: \bigcup_{k=1}^\infty \mathcal{I}_k \rightarrow  \mathcal{P}, 
 \end{gather*}
satisfying (B1) to (B4) from \eqref{Equation-I1}, \eqref{Inequality-a(j)},  \eqref{Inequality-m(j)}, and \eqref{Equation-Ik}.
\end{proof}

\section{Calculation of the Hausdorff dimension}\label{Section-General}

Let $\mathcal{B}\subseteq \mathbb{N}$ and let $(c_k)_{k\in \mathbb{N}}$ be any real sequence. In this section, we will calculate the Hausdorff dimension of the set of $A>1$ such that $\lfloor A^{C_k} \rfloor\in \mathcal{B}$ for all $k\in \mathbb{N}$.  \\

Let $R,\theta, L, Q>0$. Assume that $1+\theta \leq c_k \leq R$ for all $k\in \mathbb{N}$. Let $\mathcal{I}_k\subseteq \mathbb{N}^k$ $(k=1,2,\ldots)$, $m: \bigcup_{k=1}^\infty \mathcal{I}_k \to \mathbb{N}$, and $a: \bigcup_{k=1}^\infty \mathcal{I}_k \to \mathcal{B}$. Let us consider the following conditions corresponding to (A1) to (A4) and (B1) to (B4):
\begin{itemize}
\item[(C1)] $\mathcal{I}_1=\{1\}$, $\mathcal{I}_{k}=\{(\mathbf{j}, j )\colon  \mathbf{j}\in \mathcal{I}_{k-1}, 1\leq j\leq m(\mathbf{j})\}$ for all $k\geq 2;$
\item[(C2)] for all $k\geq 2$ and for all $\mathbf{j} \in \mathcal{I}_{k-1}$,
\[
a(\mathbf{j})^{c_{k}} \leq a(\mathbf{j},1) <\cdots <a(\mathbf{j}, m(\mathbf{j}))\leq (a(\mathbf{j})+1)^{c_{k}}-1;   
\]
\item[(C3)] for all $k\geq 2$ and for all distinct $\mathbf{j}, \mathbf{j}' \in \mathcal{I}_{k-1}$, $|a(\mathbf{j})-a(\mathbf{j}')|\geq 2;$
\item[(C4)] for all $k\geq 2$ and for all $\mathbf{j} \in \mathcal{I}_{k-1}$, $m(\mathbf{j}) \geq Q  a(\mathbf{j})^{c_{k} -1}(c_{k}\log a(\mathbf{j}))^{-L}$.
\end{itemize}

\begin{proposition}\label{Proposition-General}  Assume that there exist $\mathcal{I}_k\subseteq \mathbb{N}^k$ $(k=1,2,\ldots)$, $m: \bigcup_{k=1}^\infty \mathcal{I}_k \to \mathbb{N}$, and $a: \bigcup_{k=1}^\infty \mathcal{I}_k \to \mathcal{B}$ satisfying $(\mathrm{C}1)$, $(\mathrm{C}2)$, $(\mathrm{C}3)$, and $(\mathrm{C}4)$. There exists a large $M_1=M_1(R,\theta, L,Q)>0$ such that if $a(1)\geq M_1$, then we have
\[
\Haus \{A\in [a(1)^{1/c_1},(a(1)+1)^{1/c_1}) \colon \lfloor A^{C_k} \rfloor \in \mathcal{B} \text{ for all $k\in \mathbb{N}$ } \}\geq \left(1 +\cfrac{R}{a(1)\log a(1)} \right)^{-1} .
\]
\end{proposition}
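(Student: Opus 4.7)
The plan is to construct a Cantor-type subset $W$ of the set in question and bound $\Haus W$ from below via Lemma~\ref{Lemma-Falconer}.

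For each $\mathbf{j}\in\mathcal{I}_k$ define the half-open interval
\[
L(\mathbf{j}):=\bigl[\,a(\mathbf{j})^{1/C_k},(a(\mathbf{j})+1)^{1/C_k}\,\bigr),
\]
and set $W=\bigcap_{k=1}^\infty\bigcup_{\mathbf{j}\in\mathcal{I}_k}L(\mathbf{j})$. The nesting $L(\mathbf{j},j)\subseteq L(\mathbf{j})$ follows from (C2) exactly as in Section~\ref{Section-Simple}; moreover, if $A\in W$ and $\mathbf{j}\in\mathcal{I}_k$ is chosen with $A\in L(\mathbf{j})$, then $A^{C_k}\in[a(\mathbf{j}),a(\mathbf{j})+1)$, so $\lfloor A^{C_k}\rfloor=a(\mathbf{j})\in\mathcal{B}$. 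Hence $W$ is contained in the set whose dimension we want, and by monotonicity it suffices to bound $\Haus W$ from below.

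I would regard $W$ as a general Cantor construction with root $L(1)$, so the Falconer $k$-th level intervals are those indexed by $\mathcal{I}_{k+1}$; the half-open case is covered by Remark~\ref{Remark-Falconer}. Iterating (C2) yields, for every $\mathbf{j}\in\mathcal{I}_k$,
\[
a(1)^{C_k/c_1}\leq a(\mathbf{j}),\qquad a(\mathbf{j})+1\leq(a(1)+1)^{C_k/c_1}.
\]
Plugging these into (C4) (and using $c_{k+1}\leq R$) gives a uniform lower bound
\[
m_k:=Q'\,a(1)^{(C_{k+1}-C_k)/c_1}\,C_k^{-L}
\]
on the number of children of any $\mathbf{j}\in\mathcal{I}_k$, with $Q'=Q'(R,L,Q)$. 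The separation is controlled by (C3), which forces consecutive children to satisfy $a(\mathbf{j},j+1)-a(\mathbf{j},j)\geq 2$; applying the mean value theorem to $x\mapsto x^{1/C_{k+1}}$ at $x\leq(a(1)+1)^{C_{k+1}/c_1}$ yields a within-parent gap at least
\[
\epsilon_k:=\frac{1}{2C_{k+1}}(a(1)+1)^{(1-C_{k+1})/c_1}.
\]
An analogous computation using (C3) at the parent level shows that gaps between children of distinct parents are strictly larger, so $\epsilon_k$ is a valid uniform lower bound. Taking $M_1$ sufficiently large guarantees $m_k\geq 2$, the strict monotonicity $\epsilon_{k+1}<\epsilon_k$, and that the maximum length of a $k$-th level interval tends to $0$.

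Writing $p=a(1)$ and applying Lemma~\ref{Lemma-Falconer}, the telescoping identity $\sum_{j=1}^{k-1}(C_{j+1}-C_j)=C_k-C_1$ gives
\[
\log(m_1\cdots m_{k-1})=\frac{C_k-C_1}{c_1}\log p+O(k^2),
\]
while direct expansion produces
\[
-\log(m_k\epsilon_k)=\frac{C_k}{c_1}\log p+\frac{C_{k+1}}{c_1}\log\!\Bigl(1+\frac{1}{p}\Bigr)+O(\log C_{k+1}).
\]
Since $c_{k+1}=C_{k+1}/C_k\in[1+\theta,R]$ and $C_k\to\infty$ at least geometrically, dividing numerator and denominator by $C_k$ and using $\liminf(A_k/B_k)\geq\liminf A_k/\limsup B_k$ gives
\[
\Haus W\geq\frac{\log p}{\log p+R\log(1+1/p)}\geq\Bigl(1+\frac{R}{p\log p}\Bigr)^{-1},
\]
the last step using $\log(1+1/p)\leq 1/p$. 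The main subtlety is that $(c_{k+1})$ need not converge, which is why the $\limsup$ bound through $R$ is needed; everything else is routine bookkeeping of exponents.
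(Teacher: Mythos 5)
Your proof is correct and takes essentially the same approach as the paper: the same Cantor set $W$, the same containment argument, and the same application of Lemma~\ref{Lemma-Falconer} with comparable choices of $m_k$ and $\epsilon_k$. The one small difference is that you substitute the crude two-sided iterate of (C2), namely $a(1)^{C_k/c_1}\leq a(\mathbf{j})$ and $a(\mathbf{j})+1\leq(a(1)+1)^{C_k/c_1}$, directly into (C4) and sweep the resulting loss into a constant $Q'$ --- which, note, then depends on $a(1)$ through a $(\log(a(1)+1))^{-L}$ factor and not just on $R,L,Q$ as you wrote --- whereas the paper instead proves the monotonicity lemma for $f(x;t,s)=x^{t-s}(t\log x)^{-L}$ (Lemma~\ref{Lemma-log}) and iterates it to obtain a cleaner $m_k$; since that extra factor is $O(1)$ in $k$, both variants give the same $\liminf$ and the same final bound.
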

We will prove Proposition~\ref{Proposition-General} at the end of this section. \\

Let us assume that there exist $\mathcal{I}_k, m, a$ satisfying (C1) to (C4). Choose a large parameter $M_1=M_1(R,\theta,L,Q)>0$ which will be determined later. For all $k\in \mathbb{N}$ and $\mathbf{j}\in \mathcal{I}_k$,  let
$
 L(\mathbf{j})=[a(\mathbf{j})^{1/ C_{k}} , (a(\mathbf{j})+1 )^{1/C_{k}} ).
$
From (C1), (C2), it is clear that for any fixed $k\in \mathbb{N}$,  intervals $L(\mathbf{j})$ $(\mathbf{j}\in \mathcal{I}_k)$ are disjoint. Further,  $L(\mathbf{j}, j_{k+1}) \subseteq L(\mathbf{j})$ for all $k\in \mathbb{N}$, $\mathbf{j}\in \mathcal{I}_k$, and $1\leq j_{k+1}\leq m(\mathbf{j})$. Let 
\[
W:= \bigcap_{k=1}^\infty \bigcup_{\mathbf{j}\in \mathcal{I}_k } L(\mathbf{j}).
\]
Then the following holds:
\begin{lemma}\label{Lemma-W}
The set $W$ is a subset of
\[ 
 \{A\in [a(1)^{1/c_1},(a(1)+1)^{1/c_1}) \colon \lfloor A^{C_k} \rfloor \in \mathcal{B} \text{ for all $k\in \mathbb{N}$ } \}.
\]
\end{lemma}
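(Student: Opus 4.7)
The plan is simply to unpack the definition of $W$ and chase the implications through the definition of $L(\mathbf{j})$. Fix an arbitrary $A \in W$. By the definition of $W$ as an intersection, for every $k \in \mathbb{N}$ there exists $\mathbf{j}_k \in \mathcal{I}_k$ with $A \in L(\mathbf{j}_k)$, i.e.
\[
a(\mathbf{j}_k)^{1/C_k} \leq A < (a(\mathbf{j}_k)+1)^{1/C_k}.
\]
Raising both sides to the $C_k$-th power (a monotone operation for $A>1$ and $C_k>0$) yields $a(\mathbf{j}_k) \leq A^{C_k} < a(\mathbf{j}_k)+1$, so $\lfloor A^{C_k}\rfloor = a(\mathbf{j}_k)$. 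Since the codomain of $a$ is $\mathcal{B}$, this gives $\lfloor A^{C_k}\rfloor \in \mathcal{B}$, as required.

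It remains to verify the membership $A \in [a(1)^{1/c_1},(a(1)+1)^{1/c_1})$. Taking $k=1$ in the argument above, we have $A \in L(\mathbf{j}_1)$ for some $\mathbf{j}_1 \in \mathcal{I}_1$. But (C1) forces $\mathcal{I}_1 = \{1\}$, and by definition $C_1=c_1$, so necessarily $\mathbf{j}_1 = 1$ and $L(\mathbf{j}_1) = [a(1)^{1/c_1},(a(1)+1)^{1/c_1})$, which is exactly the interval in question.

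There is no real obstacle here: the lemma amounts to a definition chase and does not use (C3) or (C4) at all (those will matter for the dimension lower bound in Proposition~\ref{Proposition-General}, not for membership). The only point where care is needed is the half-open endpoint, which is precisely why $L(\mathbf{j})$ is taken to be closed on the left and open on the right: this guarantees that the upper inequality $A^{C_k} < a(\mathbf{j}_k)+1$ is strict, so $\lfloor A^{C_k}\rfloor$ is exactly $a(\mathbf{j}_k)$ and not $a(\mathbf{j}_k)+1$.
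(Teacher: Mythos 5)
Your argument is correct and is essentially the same definition chase the paper uses: pick $A\in W$, extract $\mathbf{j}_k\in\mathcal{I}_k$ with $A\in L(\mathbf{j}_k)$, raise the resulting inequalities to the $C_k$-th power to identify $\lfloor A^{C_k}\rfloor=a(\mathbf{j}_k)\in\mathcal{B}$, and specialize to $k=1$ for the interval membership. You just spell out a couple of small steps (the monotonicity of $x\mapsto x^{C_k}$, the role of $\mathcal{I}_1=\{1\}$) that the paper leaves implicit.
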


\begin{proof}Let us take any $A \in \bigcap_{k=1}^\infty \bigcup_{\mathbf{j}\in \mathcal{I}_k } L(\mathbf{j})$. Then for every $k\in \mathbb{N}$ there exists $\mathbf{j}_k\in \mathcal{I}_k$ such that $A\in L(\mathbf{j}_k)$. Hence we have
\begin{equation}\label{Inequality-A}
a(\mathbf{j}_k)^{1/ C_{k}}  \leq  A < (a(\mathbf{j}_k)+1 )^{1/C_{k}}
\end{equation}
for every $k\in \mathbb{N}$. In particular,  by substituting $k=1$, $A\in [a(1)^{1/c_1},(a(1)+1)^{1/c_1} )$ holds. Further, from \eqref{Inequality-A}, we see that $\lfloor A^{C_{k}} \rfloor \in \mathcal{B}$ for all $k\in \mathbb{N}$. Therefore we obtain the lemma.  
\end{proof}

Hence by the monotonicity of the Hausdorff dimension, it suffices to evaluate lower bounds of $\Haus W$ to prove Proposition~\ref{Proposition-General}. Note that the way of the construction of $W$ is same as \eqref{Equation-Cantor_set}. Thus we can apply Lemma~\ref{Lemma-Falconer} to $W$. We now evaluate $m_k$ and $\epsilon_k$ in Lemma~\ref{Lemma-Falconer}. 

\begin{lemma}\label{Lemma-dist}
For every $k\geq 2$, disjoint $k$-th level intervals of $W$ are separated by gaps of at least 
\begin{equation}\label{Inequality-dist}
 \frac{1}{C_{k}} (a(1)+1)^{(1-C_{k})/C_1}.
\end{equation} 
\end{lemma}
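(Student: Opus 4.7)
The plan is to pick two arbitrary distinct $k$-th level intervals $L(\mathbf{j}_1)$ and $L(\mathbf{j}_2)$ and bound their gap from below by combining a mean-value-theorem estimate with the a priori upper bound on $a(\mathbf{j}_2)$ obtained by iterating (C2).

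First I fix $\mathbf{j}_1,\mathbf{j}_2\in\mathcal{I}_k$ with $a(\mathbf{j}_1)<a(\mathbf{j}_2)$. By (C3) (applied with $k$ replaced by $k+1$, which is valid since $k\geq 2$), distinct elements of $\mathcal{I}_k$ always have $a$-values differing by at least $2$, so $a(\mathbf{j}_2)-a(\mathbf{j}_1)\geq 2$. Because $L(\mathbf{j}_i)=[a(\mathbf{j}_i)^{1/C_k},(a(\mathbf{j}_i)+1)^{1/C_k})$, the gap between the two intervals is exactly $a(\mathbf{j}_2)^{1/C_k}-(a(\mathbf{j}_1)+1)^{1/C_k}$. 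I then apply the mean value theorem to $f(x)=x^{1/C_k}$ on $[a(\mathbf{j}_1)+1,a(\mathbf{j}_2)]$; since $1/C_k-1<0$, the derivative $f'(x)=(1/C_k)x^{1/C_k-1}$ is decreasing, and therefore
\[
a(\mathbf{j}_2)^{1/C_k}-(a(\mathbf{j}_1)+1)^{1/C_k}\geq \frac{a(\mathbf{j}_2)-a(\mathbf{j}_1)-1}{C_k}\,a(\mathbf{j}_2)^{1/C_k-1}\geq \frac{1}{C_k}\,a(\mathbf{j}_2)^{1/C_k-1}.
\]

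To finish I relate $a(\mathbf{j}_2)$ to $a(1)$. Iterating (C2) along the ancestor chain of $\mathbf{j}_2$ gives $a(\mathbf{j}_2)+1\leq (a(1)+1)^{c_2c_3\cdots c_k}=(a(1)+1)^{C_k/C_1}$. Because $1/C_k-1<0$, the function $x\mapsto x^{1/C_k-1}$ is decreasing on $(0,\infty)$, so
\[
a(\mathbf{j}_2)^{1/C_k-1}\geq (a(1)+1)^{(C_k/C_1)(1/C_k-1)}=(a(1)+1)^{(1-C_k)/C_1},
\]
and substituting into the previous display yields the stated lower bound $\tfrac{1}{C_k}(a(1)+1)^{(1-C_k)/C_1}$.

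The key point in the argument is condition (C3): it is what ensures $a(\mathbf{j}_2)-a(\mathbf{j}_1)\geq 2$ rather than the trivial $\geq 1$ one would get from strict inequality of integers alone, and without this gap of $2$ the two half-open intervals could abut and the separation estimate would collapse to zero. Once (C3) is available, the remaining work is just careful bookkeeping with the negative exponent $1/C_k-1$ and the identity $c_2\cdots c_k=C_k/C_1$.
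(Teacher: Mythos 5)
Your proof is correct and follows essentially the same route as the paper's: a mean-value-theorem lower bound on the gap using the derivative at the right endpoint (valid because $x\mapsto x^{1/C_k}$ is concave), the crucial $\geq 2$ spacing supplied by (C3) to make the factor $a(\mathbf{j}_2)-a(\mathbf{j}_1)-1$ at least $1$, and then an iterated application of (C2) to pull the exponent back to $a(1)$. The only difference is one of scope: you treat an arbitrary pair of distinct $k$-th level intervals, while the paper's proof only compares consecutive siblings $L(\mathbf{j},j_k)$, $L(\mathbf{j},j_k+1)$ inside a fixed $(k-1)$-st level parent (which is all that Lemma~\ref{Lemma-Falconer} requires), so your version is marginally more general but uses no new ideas.
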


\begin{proof}Fix an arbitrary integer $k\geq 2$. Let us take any $\mathbf{j}=(j_1, \ldots , j_{k-1}) \in \mathcal{I}_{k-1}$. Then, for all $1\leq j_k< m(\mathbf{j}) $, by the mean value theorem and (C3), $k$-th level intervals $L(\mathbf{j},j_k)$ and $L(\mathbf{j},j_k+1)$ are separated by gaps of
\begin{align*}
&a(\mathbf{j}, j_k+1)^{1/C_{k}} - (a(\mathbf{j}, j_k )+1)^{1/C_{k}}  \\
&\geq (a(\mathbf{j}, j_k+1) -a(\mathbf{j}, j_k )-1) \cdot \frac{1}{C_{k}}  a(\mathbf{j}, j_k +1)^{1/C_{k}-1}\\
&\geq \frac{1}{C_{k}}  a(\mathbf{j}, j_k+1 )^{1/C_{k}-1}. 
\end{align*}
By applying (C2) iteratively, we obtain  
\begin{align*}
\frac{1}{C_{k}}  (a(\mathbf{j}, j_k )+1)^{1/C_{k}-1} &\geq \frac{1}{C_{k}} (a(j_1,\ldots, j_{k-1})+1)^{1/C_{k-1}- c_{k}}  \\
&\geq \frac{1}{C_{k}} (a(j_1,\ldots, j_{k-2} )+1)^{1/C_{k-2}- c_{k-1}c_{k}} \\
&\ \vdots\\
&\geq \frac{1}{C_{k}} (a(1 )+1)^{ 1/c_1-c_2c_3\cdots c_k} = \frac{1}{C_{k}} (a(1 )+1)^{ (1 - C_{k})/C_1}.
\end{align*}
Therefore we conclude Lemma \eqref{Lemma-dist}. 
\end{proof}
We will apply Lemma~\ref{Lemma-Falconer} with $\epsilon_k= C_{k}^{-1} (a(1)+1)^{(1-C_{k})/C_1}$.  To evaluate $m_k$, we present the following lemma:

\begin{lemma}\label{Lemma-log}
Let $0<s<t$.  Define $f(x; t,s)=x^{t-s}(t\log x)^{-L} $ for all real numbers $x\geq 2$. Then $f(x;t,s)$ is increasing in the range $x\geq  e^{L/(t-s)}$. 
\end{lemma}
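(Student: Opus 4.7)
The plan is to prove the lemma by straightforward calculus: compute the logarithmic derivative of $f(x;t,s)$ and identify where it is nonnegative.

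First I would note that for $x \geq 2$ we have $\log x \geq \log 2 > 0$ and $f(x;t,s) > 0$, so taking logarithms is legitimate:
\[
\log f(x;t,s) = (t-s)\log x - L\log t - L \log\log x.
\]
Differentiating with respect to $x$ gives
\[
\frac{f'(x;t,s)}{f(x;t,s)} = \frac{t-s}{x} - \frac{L}{x\log x} = \frac{1}{x\log x}\bigl( (t-s)\log x - L \bigr).
\]
Since $f(x;t,s)>0$ and $x\log x > 0$ for $x\geq 2$, the sign of $f'(x;t,s)$ coincides with the sign of $(t-s)\log x - L$. The latter is nonnegative exactly when $\log x \geq L/(t-s)$, i.e.\ when $x \geq e^{L/(t-s)}$, which is the desired range. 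Hence $f(\cdot\,;t,s)$ is increasing on $[e^{L/(t-s)},\infty)$.

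There is no real obstacle here; the only points worth mentioning are that $t-s>0$ by hypothesis (so $L/(t-s)$ is well defined and positive, and $e^{L/(t-s)}>1$), and that the identity $\frac{d}{dx}\log\log x = 1/(x\log x)$ needs $\log x > 0$, which is guaranteed on the domain $x\geq 2$. The factor $t^{-L}$ in $f$ is a positive constant in $x$ and plays no role in monotonicity, which is why it drops out of the computation.
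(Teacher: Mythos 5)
Your proof is correct and takes essentially the same route as the paper's: both compute the derivative of $f$ and observe its sign is governed by $(t-s)\log x - L$. You use logarithmic differentiation where the paper applies the quotient rule directly, but this is a cosmetic difference, not a genuinely different argument.
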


\begin{proof} Since
\[
\frac{d}{dx}f(x;t,s)= \frac{(t-s)x^{t-s-1}(t\log x)^L -x^{t-s} L (t\log x)^{L-1}\cdot t \cdot 1/x }{ (t\log x)^{2L}},
\]
we have $\frac{d}{dx}f(x;t,s)\geq 0$ for all $x\geq e^{L/(t-s)}$. Therefore we conclude Lemma~\ref{Lemma-log}.
\end{proof}

\begin{lemma}\label{Lemma-number}
 Assume that $a(1)\geq M_1$.  For every integer $k\geq 2$, each $(k-1)$-st level interval of $W$ completely contains at least 
\begin{equation*}
\frac{Qa(1)^{(C_{k}-C_{k-1})/C_1  }}{ (C_{k}\log a(1))^L}
\end{equation*}
$k$-th level intervals if $M_1$ is sufficiently large. 
\end{lemma}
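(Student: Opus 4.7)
The plan is to unwind the quantitative content of (C1), (C2), and (C4), then invoke the monotonicity furnished by Lemma~\ref{Lemma-log}. By (C1), each $(k-1)$-st level interval $L(\mathbf{j})$ with $\mathbf{j}\in\mathcal{I}_{k-1}$ encloses exactly the $m(\mathbf{j})$ intervals $L(\mathbf{j},j_k)$ for $1\leq j_k\leq m(\mathbf{j})$, and these are already known to lie inside $L(\mathbf{j})$ from the discussion preceding Lemma~\ref{Lemma-W}. Thus the task reduces to a uniform lower bound on $m(\mathbf{j})$ over $\mathbf{j}\in\mathcal{I}_{k-1}$.

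First I would apply (C4) to obtain $m(\mathbf{j})\geq Q\,f(a(\mathbf{j});c_k,1)$, where $f(x;t,s)=x^{t-s}(t\log x)^{-L}$ is the function from Lemma~\ref{Lemma-log}. To push this lower bound down to one depending only on $a(1)$, I would iterate the left inequality of (C2): writing $\mathbf{j}=(j_1,\ldots,j_{k-1})$ and $\mathbf{j}^{(i)}=(j_1,\ldots,j_i)$, one has $a(\mathbf{j}^{(i)})\geq a(\mathbf{j}^{(i-1)})^{c_i}$, and iterating yields $a(\mathbf{j})\geq a(1)^{c_2\cdots c_{k-1}}=a(1)^{C_{k-1}/C_1}$. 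Since Lemma~\ref{Lemma-log} shows $f(\,\cdot\,;c_k,1)$ is increasing on $[e^{L/(c_k-1)},\infty)$, and since $c_k-1\geq\theta$ together with $C_{k-1}/C_1\geq 1$ implies $a(\mathbf{j})\geq a(1)^{C_{k-1}/C_1}\geq e^{L/(c_k-1)}$ as soon as $M_1\geq e^{L/\theta}$, the monotonicity yields $f(a(\mathbf{j});c_k,1)\geq f(a(1)^{C_{k-1}/C_1};c_k,1)$.

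A direct evaluation at $x=a(1)^{C_{k-1}/C_1}$ then produces
\[
f(a(1)^{C_{k-1}/C_1};c_k,1)=\frac{a(1)^{C_{k-1}(c_k-1)/C_1}\,C_1^L}{(C_k\log a(1))^L}\geq\frac{a(1)^{(C_k-C_{k-1})/C_1}}{(C_k\log a(1))^L},
\]
using $C_{k-1}(c_k-1)=C_k-C_{k-1}$ and $C_1\geq 1+\theta\geq 1$. Combining the estimates delivers the claimed bound on $m(\mathbf{j})$ and hence on the number of $k$-th level subintervals. The only substantive obstacle is the bookkeeping that guarantees the threshold $e^{L/(c_k-1)}$ is uniformly bounded in $k$; this is precisely where the hypothesis $c_k\geq 1+\theta$ (rather than merely $c_k>1$) is used, ensuring a single choice of $M_1=M_1(R,\theta,L,Q)$ works for every $k\geq 2$.
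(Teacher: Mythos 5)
Your proof is correct and takes essentially the same route as the paper's: both start from (C4) to obtain $m(\mathbf{j})\geq Q\,f(a(\mathbf{j});c_k,1)$, then pass down to a bound depending only on $a(1)$ by combining (C2) with the monotonicity from Lemma~\ref{Lemma-log} (under $M_1\geq e^{L/\theta}$), arriving at $f(a(1);C_k/C_1,C_{k-1}/C_1)$ and finally using $C_1\geq 1$. The only cosmetic difference is that you iterate $a(\mathbf{j})\geq a(1)^{C_{k-1}/C_1}$ directly and apply monotonicity of $f(\,\cdot\,;c_k,1)$ a single time, whereas the paper iterates the $f$-inequality step by step via the identity $f(x^{c};t,s)=f(x;ct,cs)$; the two routes are interchangeable.
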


\begin{proof}
 Let us fix an arbitrary integer $k\geq 2$. Take any $\mathbf{j}=(j_1,\ldots, j_{k-1})\in \mathcal{I}_{k-1}$. Let $f(x;t,s)$ be so as in Lemma~\ref{Lemma-log}. By (C4),  the interval $L(\mathbf{j})$ completely contains at least  
\begin{align*}
m(\mathbf{j})\geq Q \frac{a(\mathbf{j})^{c_{k} -1}}{ (c_{k} \log a(\mathbf{j}))^L }
\end{align*}
intervals $L(\mathbf{j}')$ $(\mathbf{j}'\in \mathcal{I}_{k} )$. 
By the condition $c_{k}\geq 1+\theta$ and Lemma~\ref{Lemma-log}, $f(x;c_{k},1)$ is increasing in the range $x\geq e^{L/\theta}$. Note that from (C2), $M_1\leq a(1)\leq a(\mathbf{j})$ for all $\mathbf{j}\in \bigcup_{k=1}^\infty \mathcal{I}_k$. Hence by (C2), we have
\begin{align*}
 f(a(\mathbf{j});c_{k},1) &\geq f (a(j_1,\ldots,j_{k-2})^{c_{k-1}};c_{k},1)\\
 &= f (a(j_1,\ldots,j_{k-2});c_{k-1}c_{k},c_{k-1})
\end{align*}
if $M_1\geq e^{L/\theta}$. Note that $c_{k-1}c_{k} -c_{k-1}= c_{k-1}(c_{k}-1)\geq (1+\theta)\theta\geq \theta $. 
Thus if $M_1\geq e^{L/\theta}$, then we can repeat the above argument as follows:
\begin{align*}
 f(a(\mathbf{j});c_{k},1) &\geq  f (a(j_1,\ldots,j_{k-2});c_{k-1}c_{k},c_{k-1})\\
 &\geq f (a(j_1,\ldots,j_{k-3});c_{k-2}c_{k-1}c_{k},c_{k-2}c_{k-1})\\
 &\ \vdots\\
 &\geq f(a(1); c_2 \cdots c_{k}, c_2\cdots c_{k-1})\\
 &=f(a(1); C_{k}/C_1, C_{k-1}/C_1).
\end{align*}
Therefore we get Lemma~\ref{Lemma-number}. 
\end{proof}
For all $k\in \mathbb{N}$, let 
\[
m_k=\frac{Qa(1)^{(C_{k}-C_{k-1})/C_1  }}{ (C_{k}\log a(1))^L}.
\]
\begin{lemma}\label{Lemma-mk}
Assume that $a(1)\geq M_1$. Then $m_k\geq 2$ holds for all $k\geq 2$ if $M_1>0$ is sufficiently large. 
 \end{lemma}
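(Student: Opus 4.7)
The plan is to reduce the bound on $m_k$ to a clean expression in $a(1)$ alone by exploiting the hypotheses $1+\theta \leq c_k \leq R$, and then verify that this bound exceeds $2$ uniformly in $k$ once $a(1)$ is large.

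First I would rewrite the exponent on $a(1)$. Since $C_k - C_{k-1} = C_{k-1}(c_k - 1)$, we have
\[
\frac{C_k - C_{k-1}}{C_1} = c_2 \cdots c_{k-1}(c_k - 1) \geq (1+\theta)^{k-2}\theta
\]
for every $k \geq 2$ (interpreting the empty product as $1$ when $k=2$). The companion upper bound $C_k \leq R^k$ then gives
\[
m_k \;\geq\; \frac{Q\, a(1)^{(1+\theta)^{k-2}\theta}}{R^{kL}(\log a(1))^L}.
\]

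It remains to show this right-hand side is at least $2$ whenever $a(1) \geq M_1$, with $M_1 = M_1(R,\theta,L,Q)$ chosen large. Taking logarithms reduces the task to
\[
(1+\theta)^{k-2}\theta \log a(1) \;\geq\; kL \log R + L \log \log a(1) + \log(2/Q).
\]
The coefficient $(1+\theta)^{k-2}$ on the left grows exponentially in $k$, whereas the right-hand side grows only linearly in $k$. Hence there is a threshold $k_0 = k_0(R,\theta,L,Q)$ beyond which the inequality holds automatically as soon as $a(1) \geq 3$, say. For each of the finitely many remaining indices $2 \leq k < k_0$, the required inequality reduces to $a(1)$ lying above some threshold depending on $k$ (and on $R,\theta,L,Q$); taking $M_1$ to be the maximum of these finitely many thresholds handles every $k \geq 2$ at once.

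The argument is essentially routine and I do not foresee any serious obstacle. The only mildly delicate point is that the super-exponential growth in $k$ of the numerator has not yet kicked in for small $k$, so $M_1$ must be taken large enough to deal with the small-$k$ regime directly rather than by asymptotic comparison.
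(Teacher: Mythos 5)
Your proof is correct and follows essentially the same strategy as the paper: split $k$ into a large-$k$ regime, where the doubly-exponential factor $a(1)^{(1+\theta)^{k-2}\theta}$ overwhelms the polynomial $R^{kL}$, and a small-$k$ regime handled by choosing $M_1$ large enough for the finitely many remaining indices. The only slight loose end in your write-up is the claim that a single threshold $k_0$ makes the logarithmic inequality hold \emph{uniformly} for all $a(1)\geq 3$; this is true (for $k$ large enough that $(1+\theta)^{k-2}\theta \geq L/\log 3$ the left side minus the right side is nondecreasing in $a(1)$, so it suffices to check $a(1)=3$), but you should say a word about it. The paper avoids this point by first invoking Lemma~\ref{Lemma-log} to replace $a(1)$ by its minimum possible value $e^{L/\theta}$, which makes the large-$k$ bound manifestly independent of $a(1)$; your direct-logarithm route buys a marginally more elementary argument at the cost of that extra monotonicity remark.
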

 \begin{proof}
 Since $C_{k}-C_{k-1}\geq (1+\theta)^{k-1}\theta$ and $C_{k}\leq R^{k}$,  Lemma~\ref{Lemma-log} implies
\[ 
 m_k=Q\frac{a(1)^{(C_{k}-C_{k-1})/C_1  }}{ (C_{k}\log a(1))^L}\geq   Q\frac{e^{L (1+\theta)^{k-1}/R  }}{ (R^{k}L/\theta )^L}
\]
if $M_1\geq e^{L/\theta}$. Therefore there exists $k_0=k_0(R,L,\theta)>0$ such that for all $k\geq k_0$, $m_k\geq 2$ holds.  In the case $1\leq k\leq k_0$, we see that 
\[
 m_k=Q\frac{a(1)^{(C_{k}-C_{k-1})/C_1  }}{ (C_{k}\log a(1))^L}\geq  \frac{QM_1^{\theta/R}}{(R^{k_0} \log M_1 )^L }. 
\]
Hence if $M_1$ is sufficiently large, $m_k\geq 2$ for all $1\leq k\leq k_0$. Therefore we conclude the lemma. \\
\end{proof}

\begin{proof}[Proof of Proposition~\ref{Proposition-General}]
 Let us choose $\mathcal{I}_k\subseteq \mathbb{N}^k$ $(k=1,2,\ldots)$, $m: \bigcup_{k=1}^\infty \mathcal{I}_k \to \mathbb{N}$, and $a: \bigcup_{k=1}^\infty \mathcal{I}_k \to \mathcal{B}$ satisfying (C1), (C2), (C3), and (C4). Let $M_1=M_1(R,\theta,L,Q)>0$ be a sufficiently large parameter. Assume that $a(1)\geq M_1$. By Lemma~\ref{Lemma-dist}, Lemma~\ref{Lemma-number}, and Lemma~\ref{Lemma-mk}, we apply Lemma~\ref{Lemma-Falconer} to $W$ with 
 \begin{gather*}
 \epsilon_k = \frac{1}{C_{k}} (a(1)+1)^{(1-C_{k})/C_1},\quad  m_k=\frac{Qa(1)^{(C_{k}-C_{k-1})/C_1  }}{ (C_k\log a(1))^L}.
 \end{gather*}
 Then we have 
\begin{align*}
\Haus W &\geq \liminf_{k\rightarrow \infty} \frac{\log (m_2 m_3 \cdots m_{k-1})}{-\log m_k\epsilon_k } \\
&= \liminf_{k\rightarrow \infty} \frac{\log (Q^{k-2}(\log a(1))^{-L(k-2)} a(1)^{(C_{k-1}-C_1)/C_1} (C_2\cdot C_3\cdots C_{k-1} )^{-L} ) }{\log(   C_k (a(1)+1)^{(C_{k}-1)/C_1}  Q^{-1} a(1)^{(C_{k-1}-C_{k})/C_1} C_{k}^L (\log a(1))^L  ) }.
\end{align*}
Since $ C_{k-1} \geq (1+\theta)^{k-1}$ for any $k\geq 2$, the numerator of this fractional is 
\begin{align*}
&\log (Q^{k-2}(\log a(1))^{-L(k-2)} a(1)^{(C_{k-1}-C_1)/C_1} (C_2\cdot C_3\cdots C_{k-1} )^{-L} )\\
&= (k-2)\log Q -L(k-2)\log\log a(1)+ (C_{k-1}/C_1-1) \log a(1) -L \sum_{j=2}^{k-1} \log C_j  \\
&= (C_{k-1}/C_1) \log a(1)+o(C_{k-1}) \quad (\text{as}\ k\rightarrow \infty),
\end{align*}
where the last inequality follows from
\[
\sum_{j=2}^{k-1} \log C_j  \leq \sum_{j=2}^{k-1} \log R^j \leq k^2 R =o(C_{k-1}) \quad (\text{as}\ k\rightarrow \infty).
\]
Further, since $C_{k-1} \leq R^{k-1}$ for all $k\in \mathbb{N}$, the denominator is 
\begin{align*}
&\log(    C_k (a(1)+1)^{(C_{k}-1)/C_1}  Q^{-1} a(1)^{(C_{k-1}-C_{k})/C_1} C_{k}^L (\log a(1))^L   )  \\
&=(L+1) \log C_{k} + (C_{k}/C_1-1/C_1)\log (1+1/a(1)) \\
&\hspace{40pt} -\log Q + (C_{k-1}/C_1) \log a(1)+L \log\log a(1)\\
&= (C_{k}/C_1)\log (1+1/a(1)) +(C_{k-1}/C_1) \log a(1) + o(C_{k-1}) \quad (\text{as }\rightarrow \infty)\\
&\leq C_{k-1} R/(C_1a(1))  +(C_{k-1}/C_1)\log a(1)  +o (C_{k-1})\quad (\text{as }\rightarrow \infty).
\end{align*}
Hence we have 
\begin{align*}
\Haus W &\geq \liminf_{k\rightarrow \infty} \frac{(C_{k-1}/C_1) \log a(1)+o(C_{k-1})}{C_{k-1} R/(a(1)C_1)  +(C_{k-1}/C_1)\log a(1)  +o (C_{k-1}) }\\
&= \frac{1}{1 +\cfrac{R}{a(1)\log a(1)} }.
\end{align*}

\end{proof}

\section{Proof of Theorem~\ref{Theorem-main1}}\label{Section-ProofMain} 
\begin{theorem}\label{Theorem-main2}Let $R>0$. Let $(c_k)_{k\in\mathbb{N}}$ be any real sequence satisfying $2\leq c_k \leq R$ for all $k\in \mathbb{N}$. Let $C_k=c_1\cdots c_k$ for all $k\in \mathbb{N}$. Then, there exists $M_2=M_2(R)>0$ such that for all $M\geq M_2$,  we can find a prime number $p\in [M,2M]$ satisfying 
 \[
 \Haus \{A\in [p^{1/c_1},(p+1)^{1/c_1}) \colon \lfloor A^{C_k} \rfloor \text{ is prime-representing} \}\geq \frac{1}{1+R/(p\log p)}.
 \]
\end{theorem}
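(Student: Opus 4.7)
The plan is to combine Lemma~\ref{Lemma-construction} with Proposition~\ref{Proposition-General}, since the former produces exactly the Cantor-like data that the latter consumes.

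First, I would apply Lemma~\ref{Lemma-construction} with the given bound $R$ and the sequence $(c_k)_{k\in\mathbb{N}}$. This yields a constant $M_0 = M_0(R)$ such that for every $M \geq M_0$ there is a prime $p_1 \in [M,2M]$ together with sets $\mathcal{I}_k \subseteq \mathbb{N}^k$ and maps $a\colon \bigcup_k \mathcal{I}_k \to \mathcal{P}$, $m\colon \bigcup_k \mathcal{I}_k \to \mathbb{N}$, with $a(1)=p_1$, satisfying (B1)--(B4). Taking $\mathcal{B} := \mathcal{P}$, conditions (B1)--(B4) are identical to (C1)--(C4) with the parameter choices
\[
\theta = 1, \qquad L = 1, \qquad Q = d_2,
\]
since $c_k \geq 2 = 1+\theta$ and (B4) reads $m(\mathbf{j}) \geq d_2\, a(\mathbf{j})^{c_k-1}(c_k\log a(\mathbf{j}))^{-1}$.

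Next, I would invoke Proposition~\ref{Proposition-General} with these parameters, obtaining a threshold $M_1 = M_1(R,1,1,d_2)$. Setting
\[
M_2 := \max\bigl(M_0(R),\, M_1(R,1,1,d_2)\bigr),
\]
and taking any $M \geq M_2$, the prime $p := p_1 \in [M,2M]$ produced by Lemma~\ref{Lemma-construction} satisfies $a(1) = p \geq M_1$, so Proposition~\ref{Proposition-General} applies and gives
\[
\Haus \bigl\{A \in [p^{1/c_1},(p+1)^{1/c_1}) : \lfloor A^{C_k} \rfloor \in \mathcal{P} \text{ for all } k \in \mathbb{N} \bigr\} \geq \frac{1}{1 + R/(p\log p)}.
\]
Since $\mathcal{B} = \mathcal{P}$ means exactly that $\lfloor A^{C_k}\rfloor$ is prime-representing, this is the desired conclusion.

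There is essentially no obstacle here beyond bookkeeping: the two preceding results were designed to fit together, and the only real task is identifying the parameters $(\theta,L,Q)$ that align (B1)--(B4) with (C1)--(C4) and choosing $M_2$ large enough to dominate both thresholds $M_0(R)$ and $M_1(R,1,1,d_2)$. In particular, one should note that the interval $[a(1)^{1/c_1},(a(1)+1)^{1/c_1})$ appearing in Proposition~\ref{Proposition-General} is exactly $[p^{1/c_1},(p+1)^{1/c_1})$, so no additional manipulation is required.
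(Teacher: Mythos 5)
Your proof is correct and follows essentially the same route as the paper: apply Lemma~\ref{Lemma-construction} to produce the Cantor data, observe that (B1)--(B4) instantiate (C1)--(C4) with $\theta=1$, $L=1$, $Q=d_2$, $\mathcal{B}=\mathcal{P}$, set $M_2=\max\{M_0,M_1\}$, and invoke Proposition~\ref{Proposition-General}. (As a small side note, you correctly state $a(1)=p\geq M_1$, whereas the paper's proof writes $a(1)=p_1\geq M_0$; since $p\geq M\geq M_2\geq M_1$, the relevant inequality is indeed $\geq M_1$, so your version is the cleaner statement.)
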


\begin{proof}[Proof of Theorem~\ref{Theorem-main2}]
Let $M_0=M_0(R)$ and  $M_1=M_1(R,1,1,d_2)$ be so as in Lemma~\ref{Lemma-construction} and Proposition~\ref{Proposition-General}, respectively. Set $M_2= \max\{M_0,M_1\}$. By Lemma~\ref{Lemma-construction}, for every $M\geq M_2$, we find a prime number $p_1\in [M,2M]$, and we construct $\mathcal{I}_k\subseteq \mathbb{N}^k$ $(k\in \mathbb{N})$, $a: \bigcup_{k=1}^\infty \mathcal{I}_k \to \mathcal{P}$ with $a(1)=p_1$, and $m:\bigcup_{k=1}^\infty \mathcal{I}_k \to \mathbb{N} $ satisfying (B1) to (B4). Then $\mathcal{I}_k$, $a$, $m$ satisfy (C1) to (C4) with $\theta=1$, $L=1$, $Q=d_2$, and $\mathcal{B}=\mathcal{P}$. Further, $a(1)=p_1\geq M_0$ holds. Therefore, applying Proposition~\ref{Proposition-General}, we conclude Theorem~\ref{Theorem-main2}.
\end{proof}

\begin{proof}[Proof of Theorem~\ref{Theorem-main1}]Let $(c_k)_{k\in\mathbb{N}}$ be any bounded real sequence satisfying $c_k\geq 2$ for all $k\in \mathbb{N}$. Then there exists $R>0$ such that $2\leq c_k\leq R$ for all $k\in \mathbb{N}$. Therefore by Theorem~\ref{Theorem-main2} and the monotonicity of the Hausdorff dimension,  there exists $M_2>0$ such that for all $M \geq M_2$ we find a prime number $p\in [M,2M]$ satisfying 
\begin{align*} 
\Haus \mathcal{W}(c_k) &\geq \Haus \{A\in [p^{1/c_1},(p+1)^{1/c_1}) \colon \lfloor A^{C_k} \rfloor \text{ is prime-representing} \}\\
&\geq \frac{1}{1+R/(p\log p)} \geq \frac{1}{1+R/(M\log M)}.   
\end{align*}
  By taking $M\rightarrow \infty$, we have $\Haus \mathcal{W}(c_k)\geq 1$. Therefore we conclude Theorem~\ref{Theorem-main1} since $\Haus \mathcal{W}(c_k)\leq \Haus \mathbb{R}=1$.   
\end{proof}

\section*{Acknowledgement}
The author was supported by JSPS KAKENHI Grant Number JP19J20878.

\bibliographystyle{amsalpha}
\bibliography{references_PR}
\end{document}